\newtheorem{theorem}{Theorem}
\newtheorem*{theoremA}{Theorem A}
\newtheorem*{theoremB}{Theorem B}
\newtheorem{lemma}{Lemma}
\newtheorem*{remark}{Remark}
\newtheorem*{remarks}{Remarks}
\newtheorem{question}{Question}
\def \isnatural {\in\mathbb{N}}
\def\R{\mathbb{R}}
\def\C{\mathbb{C}}
\def\N{\mathbb{N}}
\def\Z{\mathbb{Z}}
\def\D{\mathbb{D}}
\newcommand\qfor{\quad\text{for }}
\numberwithin{equation}{section}
\newcommand{\A}{{\mathcal A}}
\newcommand{\B}{{\mathcal B}}
\newcommand{\Bdisc}{{\mathcal{B}_{\D}}}
\newcommand{\LL}{{\mathcal L}}
\def\blfootnote{\xdef\@thefnmark{}\@footnotetext}
\begin{document}
%
%
%
%
\title[The MacLane class and the Eremenko-Lyubich class]{The MacLane class and the Eremenko-Lyubich class}
\author{Karl F. Barth, \ Philip J. Rippon, \ David J. Sixsmith}
\address{Department of Mathematics \\
	 Syracuse University \\
   Syracuse NY 13244\\
   USA}
\address{Department of Mathematics and Statistics \\
	 The Open University \\
   Walton Hall\\
   Milton Keynes MK7 6AA\\
   UK}
\email{phil.rippon@open.ac.uk}
\address{Dept. of Mathematical Sciences \\
   University of Liverpool \\
   Liverpool L69 7ZL \\
   UK}
\email{david.sixsmith@open.ac.uk}
\dedicatory{Sadly, Professor Karl Barth died in May 2016, not long before the work on this article was completed. The other authors dedicate this paper to him.}
%
%
%
%
\begin{abstract}
In 1970 G. R. MacLane asked if it is possible for a locally univalent function in the class $\A$ to have an arc tract. This question remains open, but several results about it have been given. We significantly strengthen these results, in particular replacing the condition of local univalence by the more general condition that the set of critical values is bounded. Also, we adapt a recent powerful technique of C.~J.~Bishop in order to show that there is a function in the Eremenko-Lyubich class for the disc that is not in the class $\A$.
\end{abstract}
\maketitle
%
%
%
%
\blfootnote{2010 \itshape Mathematics Subject Classification. \normalfont Primary 30D40.}
\section{Introduction}
We let $\D$ be the unit disk $\{z \in \C : |z| < 1\}$, and  denote the extended complex plane by $\widehat{\C} = \C \cup \{\infty\}$. We say that a path $\Gamma : [0,1) \to \D$ is a \emph{boundary path} if $|\Gamma(t)| \rightarrow 1$ as $t \rightarrow 1$. We call the set $\overline{\Gamma} \cap \partial \D$ the \emph{end} of $\Gamma$. We say that a holomorphic function $f$ defined in $\D$ \emph{has asymptotic value} $a \in \widehat{\C}$ if
there is a boundary path $\Gamma$ such that $f(\Gamma(t)) \rightarrow a$ as $t\rightarrow 1$. In this case we say that $\Gamma$ is an \emph{asymptotic curve over} the value $a$. If the end of $\Gamma$ is a singleton, $\{\zeta\}$, then we say that $f$ has asymptotic value $a$ \emph{at} $\zeta$. 

Denote the chordal metric on $\widehat{\C}$ by $d(a,b)$, and let 
$$
U(a,\epsilon) = \{z \in \widehat{\C} : d(a,z) < \epsilon\}, \qfor a \in \widehat{\C} \text{ and } \epsilon > 0.
$$
Suppose that $a \in \widehat{\C}$, and that to each $\epsilon > 0$ there is a component $D(\epsilon)$ of $f^{-1}(U(a,\epsilon))$ with the properties that $D(\epsilon_1 ) \subset D(\epsilon_2)$, for $0 < \epsilon_1 < \epsilon_2$, and $\bigcap_{\epsilon>0} D(\epsilon) = \emptyset$. Then we say that the pair $\{D(\epsilon),a\}$ is a \emph{tract} of $f$ \emph{over} the value~$a$. The set $K = \bigcap_{\epsilon>0} \overline{D(\epsilon)}$ is called the \emph{end} of the tract. It is easily shown that $K$ is either a point, in which case the tract is called a \emph{point} tract, or an arc, in which case the tract is called an \emph{arc} tract. If $K = \partial \D$ and for each arc $\gamma \subset \partial \D$ there is a sequence of arcs $\gamma_n \subset D(1/n)$ which tend to $\gamma$, then $K$ is called a \emph{global tract}.
%
%

The \emph{MacLane class}, $\A$, is defined as those functions holomorphic and non-constant in $\D$ that have asymptotic values at points of a dense subset of $\partial \D$; we refer to \cite{MR0148923} for an extensive discussion of this class. MacLane (\cite[p.281]{MR0274765}, and see also \cite[p.570]{MR0247100}), asked the following question about locally univalent functions in the class $\A$.
\begin{question}
\label{q1}
If $f \in \A$ is such that $f' \ne 0$, can $f$ have an arc tract?
\end{question}
After nearly half a century, this question remains open. However, many partial results have been obtained, and our goal in this paper is to generalise these. In particular, we seek to relax the condition of local univalence (in other words, $f' \ne 0$) to the condition that $CV(f)$ is bounded. Here $$CV(f) = \{ f(z) : z \in \D \text{ and } f'(z) = 0\}$$ denotes the set of critical values of $f$, in other words the images of the critical points of $f$. We also denote the set of \emph{finite} asymptotic values of $f$ by $AV(f)$.

%
%
We begin by recalling the following theorem, which combines the main results of \cite{MR1623386}. Here, following MacLane \cite{MR0274765}, we say that infinity is a \emph{linearly accessible} asymptotic value at a point $\zeta \in \partial \D$ if there is an asymptotic curve over infinity, $\Gamma$, which ends at $\zeta$, such that $f(\Gamma)$ is a radial line.
\begin{theoremA}
Suppose that $f \in \A$ and that $f' \ne 0$ in $\D$. If either of the following conditions also holds, then $f$ has no arc tracts.
\begin{enumerate}[(a)]
\item The set $AV(f)$ is bounded.
\item Infinity is a linearly accessible asymptotic value at a dense subset of $\partial \D$.
\end{enumerate}
\end{theoremA}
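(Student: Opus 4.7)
The natural strategy is proof by contradiction, organised in three stages: reducing the tract value to infinity, unfolding the tract using local univalence, and finally extracting a contradiction from the remaining hypothesis. Suppose $f\in\A$ with $f'\ne 0$ on $\D$ has an arc tract $\{D(\epsilon),a\}$ with end an arc $K\subset\partial\D$. The fundamental general fact I would use is that on $\partial D(\epsilon)\cap\D$ one has $d(f,a)=\epsilon$, so no asymptotic curve whose asymptotic value differs from $a$ can ultimately remain inside $D(\epsilon)$, while any boundary path lying ultimately in $D(\epsilon)$ and ending on $K$ realises $a$ as an asymptotic value of $f$.

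The first stage is to show $a=\infty$ under either hypothesis. In case (b), at a point $\zeta$ in the relative interior of $K$ the hypothesis supplies an asymptotic curve $\Gamma$ with $f(\Gamma)$ a radial line to infinity; since $\zeta$ is accessible from within $D(\epsilon)$ and $\Gamma$ ends at $\zeta$, a topological argument forces $\Gamma$ ultimately into $D(\epsilon)$, so that the asymptotic value $\infty$ must coincide with $a$. In case (a), a finite $a$ would keep $f$ bounded on the simply connected domain $D(\epsilon)$; combined with MacLane's theorem, which provides asymptotic values at a dense subset of $K$, and with the boundedness of $AV(f)$, a Lindel\"of or Privalov-type argument forces the asymptotic value $a$ on too much of $K$ to be compatible with the tract structure, and so $a=\infty$. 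With $a=\infty$ in hand, the hypothesis $f'\ne 0$ implies that $f|_{D(\epsilon)}$ is a covering map onto a punctured neighbourhood of infinity, and since $D(\epsilon)$ is simply connected, $\phi=\log f$ is univalent on $D(\epsilon)$ and identifies it conformally with a subdomain $\Omega$ of a half-plane, under which $K$ corresponds to accumulation at infinity in $\Omega$ and $\partial D(\epsilon)\cap\D$ to the vertical boundary line.

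The final stage, and in my view the hardest, is to extract the contradiction. In case (a), MacLane's theorem and the boundedness of $AV(f)$ together supply a dense family of asymptotic curves to bounded values ending at points of $K$; these curves must lie ultimately in $\D\setminus D(\epsilon)$, and an Ahlfors distortion or harmonic-measure estimate applied in the conformal model $\Omega$ should show that such a family cannot coexist with a tract whose end is the entire arc $K$. In case (b), the linearly accessible radial asymptotic curves over infinity at a dense set of boundary points outside $K$ generate neighbouring structures that, combined with the unique covering behaviour of $f$ around $D(\epsilon)$ imposed by local univalence, force a contradiction. The reductions in the first two stages are relatively standard manoeuvres in the MacLane-class literature, but I expect the delicate work to lie in turning these qualitative tensions into rigorous distortion or harmonic-measure estimates in $\Omega$.
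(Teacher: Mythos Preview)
Your second stage contains a fatal topological error: you assert that $D(\epsilon)$ is simply connected, but MacLane proved (recorded in the paper as Lemma~\ref{lemm:maclane} part~\eqref{item:infcon}) that for $f\in\A$ with an arc tract, each $D(\epsilon)$ is \emph{infinitely} connected. A component of $\{z\in\D:|f(z)|>R\}$ has no maximum-principle reason to be simply connected, and here it provably is not. This collapses your logarithmic unfolding and hence your entire third stage; the distortion and harmonic-measure estimates you anticipate never enter.

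The paper's argument for part~(a) in fact turns on precisely this tension. Under $f'\ne0$ and $AV(f)$ bounded, the restriction $f:D(\epsilon_0)\to U(\infty,\epsilon_0)\setminus\{\infty\}$ is an unbranched covering; the classification of covers of a punctured disc then forces $D(\epsilon_0)$ to be conformally a half-plane (the annular alternative already contradicts infinite connectivity), so each smaller $D(\epsilon)$ has \emph{connected} relative boundary in $\D$ --- contradicting infinite connectivity once more. No analytic estimates are needed. For part~(b) the paper does not pass through a covering model at all (without bounded $AV(f)$, the map $f|_{D(\epsilon)}$ need not be a covering); instead it tracks level curves $\{|f|=r\}$ inside a region cut out by two monotonic asymptotic curves and shows, via a continuation lemma (Lemma~\ref{lemm:components}), that some level curve must meet one $\Gamma_i$ in two distinct points, contradicting monotonicity of $f(\Gamma_i)$. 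Finally, the reduction $a=\infty$ is a general fact for functions in $\A$ (Lemma~\ref{lemm:maclane} part~\eqref{item:overinf}) and requires no case-by-case argument.
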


We generalise Theorem A in two ways. Firstly, we replace the condition of local univalence by the condition that $CV(f)$ is bounded. Secondly, we replace ``linearly accessible'' by the following weaker condition. We say that infinity is a \emph{monotonically accessible} asymptotic value at a point $\zeta \in \partial \D$ if there is an asymptotic curve over infinity, $\Gamma$, which ends at $\zeta$, such that $f(\Gamma)$ is monotonic; a curve $\gamma \subset \C$ is \emph{monotonic} if there exists $R> 0$ such that for all $r > R$ there is exactly one point on $\gamma$ of modulus $r$. 

Note that the proofs of both generalisations involve new ideas, which may be of wider interest; see Lemma~\ref{lemm:components}, for example.
\begin{theorem}
\label{theo:conditionsfornoarctracts}
Suppose that $f \in \A$ and that $CV(f)$ is bounded. If either of the following conditions also holds, then $f$ has no arc tracts.
\begin{enumerate}[(a)]
\item The set $AV(f)$ is bounded.\label{t1first}
\item Infinity is a monotonically accessible asymptotic value at a dense subset of $\partial \D$.\label{t1second}
\end{enumerate}
\end{theorem}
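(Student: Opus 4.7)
The plan is to argue by contradiction and adapt the proof of Theorem~A, the main new ingredient being a reduction that exploits the boundedness of $CV(f)$ in place of local univalence, together with (for part~(\ref{t1second})) a lifting argument that handles monotonic rather than merely linear accessibility. So suppose $f$ has an arc tract $\{D(\epsilon),\infty\}$ with end $K\subset\partial\D$. Because $CV(f)$ is bounded I would fix $\epsilon_0>0$ so small that $U(\infty,\epsilon_0)\cap CV(f)=\emptyset$; then for all $\epsilon\le\epsilon_0$ the restriction $f\colon D(\epsilon)\to U(\infty,\epsilon)\setminus\{\infty\}$ is an unramified holomorphic covering of a punctured disk, so $f'$ is non-vanishing on $D(\epsilon_0)$ and the internal geometry of the tract is that of a logarithmic or finite-degree tract, exactly as in the locally univalent setting of Theorem~A. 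Critical points of $f$ may still cluster on $\partial\D$ from outside the tract, however, and that is the additional complication to overcome.

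Next I would apply the structural Lemma~\ref{lemm:components} to control the components of $f^{-1}(U(\infty,\epsilon))$ in small neighbourhoods of points of $K$. The required conclusion is, roughly, that for each $\zeta$ in the relative interior of $K$ and each $\epsilon\le\epsilon_0$, the set $D(\epsilon)$ has a single ``sector-like'' sub-component in every sufficiently small neighbourhood of $\zeta$, so that a boundary path ending at $\zeta$ either lies eventually in $D(\epsilon)$ or eventually in $\D\setminus D(\epsilon)$ in a controlled manner. This is the genuinely new local-geometric input needed when one drops local univalence on all of $\D$.

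Given these two inputs, part~(\ref{t1first}) follows the pattern of Theorem~A(a): set $M=\sup\{|w|:w\in AV(f)\}$ and pick $\epsilon_1\le\epsilon_0$ with $1/\epsilon_1>M$, so that $|f|>M$ on $D(\epsilon_1)$. Since $f\in\A$ the interior of $K$ contains boundary points at which $f$ has asymptotic values, and by Lemma~\ref{lemm:components} I can exhibit both a boundary path into $D(\epsilon_1)$ ending at some $\zeta\in K$ (along which an asymptotic value, if it exists, must be $\infty$) and a boundary path in the complement $\D\setminus D(\epsilon_1)$ ending at some $\zeta'\in K$ (along which $|f|\le 1/\epsilon_1$ produces a bounded asymptotic value); combining these with the MacLane-class density statement yields the contradiction. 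Part~(\ref{t1second}) goes similarly: take $\zeta$ in the interior of $K$ at which $\infty$ is monotonically accessible along a curve $\Gamma$, use Lemma~\ref{lemm:components} to place a tail of $\Gamma$ inside $D(\epsilon_0)$, and exploit the unramified covering from the reduction step to lift $f\circ\Gamma$ to a curve with monotonic modulus in the universal cover of the punctured disk. The monotonicity then forces $\Gamma$ to behave eventually like a cross-cut of $D(\epsilon_0)$, and applying this at a dense collection of $\zeta\in K$ produces uncountably many disjoint cross-cuts of the single component $D(\epsilon_0)$, an impossibility.

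The hard part is Lemma~\ref{lemm:components} itself: without local univalence on all of $\D$, critical points of $f$ could in principle accumulate on $K$ from outside the tract and distort the shape of $D(\epsilon)$ near $K$, so the sector-like local structure is no longer a direct consequence of a conformal-mapping argument. Once that lemma is in place, however, the rest parallels the arguments of~\cite{MR1623386} closely, with the extra step in~(\ref{t1second}) of passing from a radial image curve to a monotonic one being essentially a lifting through the covering map provided by the first reduction.
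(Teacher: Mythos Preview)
There are genuine gaps in your proposal, and the paper's argument is structured quite differently.

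\textbf{The covering reduction.} Your first step claims that once $U(\infty,\epsilon_0)\cap CV(f)=\emptyset$, the restriction $f\colon D(\epsilon)\to U(\infty,\epsilon)\setminus\{\infty\}$ is an unramified \emph{covering}. Absence of critical values gives only a local homeomorphism; to get a covering you need properness, which is obstructed precisely by finite asymptotic values of $f$ in $U(\infty,\epsilon)$. In part~(\ref{t1first}) you also have $AV(f)$ bounded, so the covering claim is valid there; but in part~(\ref{t1second}) you do not assume $AV(f)$ bounded, so the reduction fails and your lifting argument for~(b) has no foundation.

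\textbf{Part (a): the missing contradiction.} Even granting the covering structure in part~(\ref{t1first}), your proposed contradiction---exhibiting one boundary path inside $D(\epsilon_1)$ and one outside---is not a contradiction at all; nothing prevents both from coexisting. The paper's route is cleaner: the covering is either of finite degree or logarithmic (Lemma~\ref{lemm:covering}), and in either case $\partial D(\epsilon)\cap\D$ is connected for small $\epsilon$; this contradicts the fact (Lemma~\ref{lemm:maclane}\eqref{item:infcon}) that for an arc tract each $D(\epsilon)$ is \emph{infinitely connected}. That topological obstruction is the heart of the matter, and your sketch does not reach it.

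\textbf{Lemma~\ref{lemm:components} is not what you think.} You describe it as a statement about ``sector-like sub-components'' of $D(\epsilon)$ near points of $K$. In fact it concerns compact \emph{level curves}: starting from a Jordan level curve $P(R)\subset\D$ meeting no critical points, one can grow a continuous nested family $P(r)$, $r\in[R,\tilde R)$, of Jordan level curves until they accumulate either at a critical point of $f$ or at $\partial\D$. This is what drives part~(b).

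\textbf{Part (b): the actual mechanism.} The paper does not use any covering structure for~(\ref{t1second}). Instead it takes two monotonic asymptotic curves $\Gamma_1,\Gamma_2$ ending at interior points of $K$, joins them by an arc to cut off a boundary neighbourhood $D$, and (via Lemma~\ref{lemm:maclane}\eqref{item:allvals}) finds a compact Jordan level curve $P(R)\subset D$ with $R$ larger than the assumed bound on $CV(f,U)$. Lemma~\ref{lemm:components} then grows $P(r)$ until it must escape $D$; since the critical-point alternative would produce a critical value of modulus $\ge R$ inside $U$, and since accumulation on the interior of $K$ is impossible for a level curve, the curves $P(r)$ must cross some $\Gamma_i$ in at least two points---contradicting monotonicity of $f(\Gamma_i)$. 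Your ``uncountably many disjoint cross-cuts'' idea neither follows from the hypotheses nor yields an impossibility for an infinitely connected $D(\epsilon_0)$.
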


Another partial answer to Question~1 was given by MacLane \cite[p.281]{MR0274765}, who showed that if $f \in \A$ and $f' \ne 0$, then $f$ has no \emph{global} tracts. The following is a natural generalisation of this fact.
\begin{theorem}
\label{theo:noglobaltracts}
Suppose that $f \in \A$ and that $CV(f)$ is bounded. Then $f$ has no global tracts.
\end{theorem}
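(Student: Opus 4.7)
The plan is to assume for contradiction that $f \in \A$ with $CV(f)$ bounded admits a global tract $\{D(\epsilon), a\}$, and to derive a contradiction by combining the global tract structure with the bounded critical values hypothesis, ultimately reducing to an application of Theorem~\ref{theo:conditionsfornoarctracts}(b) on a suitable subdomain of $\D$.

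First I would rule out the case $a \in \C$. In that case, $f(D(\epsilon)) \subset U(a, \epsilon)$ forces $f$ to be uniformly close to $a$ on $D(\epsilon)$, and the arcs $\gamma_n \subset D(1/n)$ tending to every arc of $\partial \D$ place $a$ in the cluster set of $f$ at every $\zeta \in \partial \D$. Combined with $f \in \A$ and classical boundary uniqueness results of Lindel\"of--Privalov type, this rigidity forces $f \equiv a$, contradicting the non-constancy required by $f \in \A$. Hence I may assume $a = \infty$.

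For $a = \infty$, boundedness of $CV(f)$ lets me choose $\epsilon_0$ so that $\{|w| > R_{\epsilon_0}\}$ avoids $CV(f)$; then $f|_{D(\epsilon)}\colon D(\epsilon) \to \{|w| > R_\epsilon\}$ is locally univalent for all $\epsilon \le \epsilon_0$, and an analysis similar in spirit to Lemma~\ref{lemm:components} gives enough control on the conformal type of $D(\epsilon)$. Using the arcs of the global tract I would then construct, at each $\zeta \in \partial \D$, an asymptotic path ending at $\zeta$ along which $|f|$ tends monotonically to infinity: pick arcs $\gamma_n \subset D(1/n)$ approaching small arcs around $\zeta$, points $z_n \in \gamma_n$ with $z_n \to \zeta$, and join them inside the nested tracts $D(1/n) \supset D(1/(n+1))$ by sub-paths along which $|f|$ strictly increases, using the level-set structure of $|f|$ on $\partial D(1/n)$. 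This shows that $\infty$ is a monotonically accessible asymptotic value of $f$ at every point of $\partial \D$.

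Finally, restrict the global tract to a Jordan subdomain $W \subset \D$ bounded by a suitable crosscut of $\D$ together with a proper subarc $\gamma \subset \partial \D$: the component of $D(\epsilon) \cap W$ whose closure meets $\gamma$ should constitute an arc tract of $f|_W$ (after a conformal identification of $W$ with $\D$), with $\infty$ monotonically accessible on a dense subset of $\gamma$ by the preceding step. Applying Theorem~\ref{theo:conditionsfornoarctracts}(b) to $f|_W$ now gives the required contradiction. The main obstacle I anticipate is the construction of the strictly monotonic asymptotic paths in the previous paragraph, since the arcs $\gamma_n$ approach boundary arcs only in a Hausdorff sense; ensuring that the concatenated path has a strictly increasing modulus of $f$ requires delicate bookkeeping of the level curves inside the nested tracts, and it is here that a variant of Lemma~\ref{lemm:components} seems indispensable.
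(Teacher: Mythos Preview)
Your proposal takes a far more circuitous route than the paper, and the crucial construction you flag as the ``main obstacle'' is indeed a genuine gap.

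The paper's argument is essentially three lines: if $f\in\A$ has a global tract, then a classical result of MacLane (\cite[Theorem~6]{MR0148923}) immediately gives that $f$ has \emph{no finite asymptotic values at all}; in particular $AV(f)$ is bounded. Since $CV(f)$ is bounded by hypothesis, Theorem~\ref{theo:conditionsfornoarctracts}\eqref{t1first} applies directly and yields that $f$ has no arc tracts---contradicting the fact that a global tract is an arc tract. You never invoke part~\eqref{t1first}; instead you try to manufacture the hypothesis of part~\eqref{t1second}, which is much harder and, as it turns out, unnecessary.

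On the gap itself: the construction of a single boundary path ending at a \emph{prescribed} point $\zeta$ along which $|f|$ is strictly increasing is not achieved by your sketch. Joining points $z_n\in\gamma_n\subset D(1/n)$ inside the nested tracts gives a path on which $|f|\to\infty$, but there is no mechanism forcing $|f|$ to be monotone along the connecting sub-paths, nor forcing the concatenated path to converge to $\zeta$ rather than oscillate along an arc of $\partial\D$. Indeed, the presence of an arc tract means precisely that such oscillation is the generic behaviour of paths on which $|f|\to\infty$; Lemma~\ref{lemm:components} controls level curves only up to the first encounter with $\partial\D$ and gives no information about \emph{which} boundary point is reached. Your final restriction-to-a-subdomain step is also underspecified: one must check that the restricted family $D(\epsilon)\cap W$ still satisfies the tract axioms after conformal transplantation, which is not automatic.

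In short, the key observation you are missing is that a global tract already forces $AV(f)$ to be bounded, making Theorem~\ref{theo:conditionsfornoarctracts}\eqref{t1first} applicable without any further construction.
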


%
%

A further result about locally univalent functions generalised in this paper is the following theorem of the first two authors \cite[Theorem 1]{MR1874247}; see also \cite[Theorem 9]{MR0274765}.
\begin{theoremB}
Suppose that $f$ is holomorphic in $\D$ and that $f' \ne 0$ there. Then $f \in \A$ and $f$ has no arc tracts if and only if $f'\in \A$.
\end{theoremB}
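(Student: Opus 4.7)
The proof proceeds by establishing the two implications separately; the hypothesis $f'\ne 0$ is used so that $f'$ is a zero-free holomorphic function on $\D$, allowing one to study its asymptotic values freely.

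For $(\Leftarrow)$, suppose $f'\in\A$. That $f\in\A$ follows by a classical integration argument: at each of the dense set of points $\zeta\in\partial\D$ where $f'$ has an asymptotic value $L\in\widehat{\C}$, one chooses a rectifiable sub-curve $\Gamma$ inside a small tract of $f'$ over $U(L,\epsilon)$ and concludes from integration of $f'$ along $\Gamma$ that $f(\Gamma(t))$ tends to a limit in $\widehat{\C}$. To show that $f$ has no arc tracts I would argue by contradiction: if $f$ has an arc tract over some value $a$ with ending arc $\gamma$, then an asymptotic curve of $f'$ ending at an interior point $\zeta\in\gamma$ must eventually lie in the tract of $f$ over $a$; combining the asymptotic behaviour of $f'$ with the geometry of the arc tract via a Phragm\'en--Lindel\"of-type estimate then yields a contradiction with the width of $\gamma$.

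For $(\Rightarrow)$, suppose $f\in\A$ has no arc tracts, and let $\zeta\in\partial\D$ be one of the dense set of points at which $f$ has an asymptotic value $a\in\widehat{\C}$ along a curve $\Gamma$. If $a$ is finite then $f$ is bounded along $\Gamma$, and I would apply Lindel\"of's theorem together with classical boundary-behaviour estimates (notably $|f'(z)|\lesssim|f(z)|/(1-|z|)$ in a suitable non-tangential region) to produce an asymptotic curve for $f'$ ending at $\zeta$. If $a=\infty$ then, since $f$ has no arc tracts, the tract of $f$ at $\zeta$ is a point tract; inside this tract I would apply a Phragm\'en--Lindel\"of argument, or a comparison with a conformal map of the tract to a half-plane, to produce a curve on which $f'$ has a limit in $\widehat{\C}$.

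The main obstacle lies in the case $a=\infty$ of the reverse direction: extracting a genuine asymptotic curve for $f'$ from the qualitative point-tract structure requires a quantitative argument, since $f'$ could a priori oscillate inside the tract. It is precisely here that the absence of arc tracts is indispensable, as the resulting geometric thinness of the tract at $\zeta$ provides the control needed to bound both the growth and the oscillation of $f'$.
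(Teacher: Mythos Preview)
Your proposal is a plausible-sounding sketch, but both directions contain genuine gaps that cannot be repaired without essentially new ideas.

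For $(\Leftarrow)$, the ``classical integration argument'' does not work as stated. If $f'$ has asymptotic value $L$ along a curve $\Gamma$ ending at $\zeta$, there is no reason for $\Gamma$ (or any curve inside the tract of $f'$ over $U(L,\epsilon)$) to be rectifiable; hence $\int_\Gamma f'(z)\,dz$ need not converge, and you cannot conclude that $f$ has a limit along $\Gamma$. Tracts can be extremely thin and oscillatory, and accessibility of $\zeta$ from inside the tract by a finite-length arc is not automatic. Your argument for the absence of arc tracts is also only a gesture: the claim that an asymptotic curve of $f'$ must eventually lie in a given tract of $f$ is unjustified, and ``Phragm\'en--Lindel\"of-type estimate'' is not an argument.

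For $(\Rightarrow)$, the estimate $|f'(z)|\lesssim |f(z)|/(1-|z|)$ is false in general: Cauchy's estimate bounds $|f'(z)|$ in terms of $\max_{|w-z|=\rho}|f(w)|$, not $|f(z)|$, and here $f$ is only bounded on a curve, not on a neighbourhood. Lindel\"of's theorem likewise requires $f$ to be bounded in a domain, not merely on a path. In the case $a=\infty$ you correctly identify the difficulty, but offer nothing concrete.

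The paper's route is quite different and avoids all of these problems. It argues by contradiction using MacLane's characterisations $\A=\B=\LL$ and Hornblower's growth criterion: assuming $f'\notin\A$ on an arc $\gamma$, one finds (via $\A=\LL$) a level set of $f'$ accumulating on a subarc of $\gamma$, and hence a sequence $(z_n)$ with $f'(z_n)$ bounded but $f(z_n)\to\infty$. The key device is then an integration-by-parts lemma: pulling back the radial segment $[c f'(z_n),\,f'(z_n)]$ under $f'$ gives curves $\Gamma_n\subset\D$ on which $|f(z)-f(z_n)|\le 2|f'(z_n)|$, so $|f|$ is large but finite on each $\Gamma_n$. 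One then shows these $\Gamma_n$ must accumulate on a non-degenerate subarc of $\partial\D$, forcing an arc tract of $f$ and giving the contradiction. The reverse direction uses the same lemma with the roles partly interchanged, together with the fact (MacLane, Theorem~7) that a function in $\A$ omitting a value has no arc tracts, which handles the passage from $f'\ne 0$ and $f'\in\A$ to ``$f'$ has no arc tracts''. None of your proposed tools---direct integration, Lindel\"of, Phragm\'en--Lindel\"of in a tract---appear; the decisive idea is instead to track $f$ along preimages under $f'$ of radial segments.
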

As in Theorem~\ref{theo:conditionsfornoarctracts} and Theorem~\ref{theo:noglobaltracts}, our goal is to replace the condition of local univalence with the condition that $CV(f)$ is bounded. Our result is as follows.
\begin{theorem}
\label{theo:moreconditions}
Suppose that $f$ is holomorphic in $\D$ and that $CV(f)$ is bounded. Then the following both hold.
\begin{enumerate}[(a)]
\item If $f \in \A$ and $f$ has no arc tracts, then $f'\in \A$.
\item If $f' \in \A$ and $f'$ has no arc tracts, then $f\in \A$ and $f$ has no arc tracts.
\end{enumerate}
\end{theorem}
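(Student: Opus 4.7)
The plan is to adapt the proof of Theorem B from \cite{MR1874247}, replacing the hypothesis $f' \ne 0$ with the weaker assumption that $CV(f)$ is bounded. The pivotal structural observation is the following. Fix $R > 0$ with $CV(f) \subset \{w : |w| \le R\}$. Then $f$ is locally univalent on every component of $f^{-1}(\{w : |w| > R\})$, and more generally on $f^{-1}(U)$ whenever $U$ is open with $U \cap \overline{CV(f)} = \emptyset$. Consequently, any tract of $f$ whose target value $a$ satisfies $|a| > R$ (including $a = \infty$), or can be moved slightly to such a value, sits inside a region where $f$ behaves as in the locally univalent case, and the arguments from the proof of Theorem B apply inside such a tract essentially unchanged. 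The main additional work is to reduce the situations that arise in our setting to this locally univalent one.

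For part (a), assume $f \in \A$ and $f$ has no arc tracts. Since $f \in \A$, the points $\zeta \in \partial \D$ at which $f$ has an asymptotic value form a dense subset of $\partial \D$. I would first show, using the no-arc-tract hypothesis together with techniques from the proof of Theorem~\ref{theo:conditionsfornoarctracts}, that we may restrict attention to those $\zeta$ whose asymptotic value is either $\infty$ or lies outside $\{|w| \le R\}$, and that these $\zeta$ still form a dense set. At each such $\zeta$, the corresponding tract of $f$ is a point tract contained in a locally univalent region of $f$; the argument of Theorem B then produces an asymptotic value of $f'$ at $\zeta$, and density gives $f' \in \A$.

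For part (b), assume $f' \in \A$ and $f'$ has no arc tracts. Suppose first, for contradiction, that $f$ has an arc tract over some value $a \in \widehat{\C}$. By shrinking $\epsilon$ if needed we may arrange $U(a,\epsilon) \cap \overline{CV(f)} = \emptyset$, so the tract component is a region on which $f$ is locally univalent; the argument of \cite{MR1874247} then transforms this into an arc tract of $f'$, contradicting the hypothesis on $f'$. Hence $f$ has no arc tracts. To conclude $f \in \A$, I start from the dense set of $\zeta \in \partial \D$ at which $f'$ has an asymptotic value. Using that $f'$ has no arc tracts, combined with Theorem~\ref{theo:conditionsfornoarctracts} and the new Lemma~\ref{lemm:components}, I would show the corresponding asymptotic curves of $f'$ can be chosen to have finite Euclidean length; integrating $f'$ along such a curve then yields an asymptotic value for $f$ at $\zeta$.

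The main obstacle I expect lies in this last step of part (b): extracting an asymptotic value of $f$ from one of $f'$ requires choosing an asymptotic curve of controlled length, and the control depends simultaneously on the absence of arc tracts for $f'$ and on the boundedness of $CV(f)$. This is where the full strength of the technical machinery developed earlier in the paper, and in particular Lemma~\ref{lemm:components}, seems to be needed; the remaining steps should largely follow the template of Theorem B once the reduction to locally univalent regions has been made.
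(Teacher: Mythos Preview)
Your plan diverges from the paper's approach and has gaps that do not appear fillable as stated. The paper does not reduce to locally univalent subregions. Its central tool is Lemma~\ref{parts}: along any curve $\Gamma$ on which $f'$ traces a radial segment from $f'(z_0)$ towards $0$, one has $|f(z_0)-f(z)|\le 2|f'(z_0)|$. In both (a) and (b) the paper first locates points $z_n\to\zeta\in\partial\D$ with $f(z_n)\to\infty$ and $|f'(z_n)|$ bounded (this uses MacLane's characterisations $\A=\B=\LL$ together with Hornblower's growth criterion~\eqref{Horn}), then lifts radial rays under $f'$ to curves $\Gamma_n$ emanating from $z_n$. The hypothesis that $CV(f)$ is bounded is used at exactly one point: since $f$ is uniformly large on $\Gamma_n$ by Lemma~\ref{parts} while critical values are bounded, $\Gamma_n$ cannot terminate at a zero of $f'$ and so must accumulate on $\partial\D$. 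A contradiction is then extracted from the boundary behaviour of the family $(\Gamma_n)$.

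Your part~(a) rests on the claim that the $\zeta\in\partial\D$ with asymptotic value either $\infty$ or of modulus greater than $R$ are dense; this fails already for bounded $f\in\A$, or for $f(z)=1/(1-z)$ once $R$ is chosen large, so the reduction to Theorem~B is unavailable on most of $\partial\D$. In part~(b) you propose to rule out arc tracts of $f$ first, but without knowing $f\in\A$ you cannot invoke Lemma~\ref{lemm:maclane}\eqref{item:overinf}, so an arc tract of $f$ could be over a value $a\in\overline{CV(f)}$, and then shrinking $U(a,\epsilon)$ to miss $\overline{CV(f)}$ is impossible; the paper establishes $f\in\A$ first. Finally, Lemma~\ref{lemm:components} gives no length control whatsoever and does not produce finite-length asymptotic curves of $f'$; the paper never needs such curves, since Lemma~\ref{parts} bounds $f$ along $\Gamma_n$ directly via integration by parts rather than by integrating $|f'|$ against arclength.
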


\begin{remark}\normalfont
It follows from \cite[Theorem 7]{MR0148923} that if $f' \ne 0$ in $\D$ and $f' \in \A$, then $f'$ has no arc tracts, so Theorem~\ref{theo:moreconditions} does indeed generalise Theorem~B. However, it is not clear that, in the statement of Theorem B, the condition that $f' \ne 0$ (which explicitly tells us that $f'$ omits a value) can be replaced by the condition that $CV(f)$ is bounded.
\end{remark}

%
%

In view of these results, it seems natural to ask the following stronger version of Question~\ref{q1}.
\begin{question}
\label{q2}
If $f \in \A$ is such that $CV(f)$ is bounded, can $f$ have an arc tract?
\end{question}

%
%

The final result in this paper concerns the relationship between the MacLane class and another well-known class of functions. In \cite{2015arXiv150200492R} the class of functions $f$ holomorphic and non-constant in a general hyperbolic domain, with the property that $CV(f) \cup AV(f)$ is bounded, was discussed and called the \emph{Eremenko-Lyubich class}. Transcendental entire functions with this property have been very widely studied, particularly in complex dynamics; see, for example, \cite{MR1196102, MR2570071, MR2753600, MR1357062}. Here we denote the Eremenko-Lyubich class in $\D$ by $\Bdisc$ to avoid confusion with MacLane's class $\B$, which we use later in the paper. 

The first part of Theorem~\ref{theo:conditionsfornoarctracts} states that if $f \in \A \cap \Bdisc$, then $f$ has no arc tracts. 
It is natural to ask, therefore, if there is any relationship between the classes $\A$ and $\Bdisc$. It is clear that $\A \not\subset \Bdisc$; for example, any conformal map of $\D$ onto an unbounded domain is in class $\A$ but is not in class $\Bdisc$. It is not so clear whether $\Bdisc \subset \A$, since no simple counterexample seems to be available. However, we are able to prove the following.
\begin{theorem}
\label{theo:finBnotA}
There is a function $f \in \Bdisc$ such that $f \notin \A$.
\end{theorem}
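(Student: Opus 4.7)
The plan is to adapt Bishop's quasiconformal folding construction, originally used to build entire functions in class $\mathcal{B}$ with prescribed pathologies, to the disc setting. The target is a holomorphic $f:\mathbb{D}\to\C$ whose tract structure over $\infty$ accumulates only on a nowhere-dense subset of $\partial\D$, and whose behaviour in the complementary region is sufficiently oscillatory that no asymptotic values exist at any boundary point outside that subset.

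First I would fix a closed, nowhere-dense set $E\subset\partial\D$ (for example, a single point such as $\{1\}$, or a Cantor-type set) and build a combinatorial model. Specifically, I would construct a locally finite tree $\tau\subset\D$ whose accumulation set on $\partial\D$ equals $E$, together with the decomposition $\D\setminus\tau = W\cup\bigsqcup_n T_n$, where the simply-connected tracts $T_n$ account for preimages of $\{|w|>1\}$ and $W$ is the complementary open set (whose closure in $\widehat{\C}$ meets $\partial\D\setminus E$). On each tract $T_n$ I would place the model map $F:=\exp\circ\psi_n$, with $\psi_n:T_n\to\{\operatorname{Re} w>0\}$ conformal, so that $F|_{T_n}$ is a universal cover of $\{|w|>1\}$ sending $\partial T_n\cap\D$ to the unit circle. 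On $W$, I would define $F$ using a Chebyshev-style construction mapping $W$ onto $\overline{\D}$ in an infinitely-sheeted way, chosen so that $F$ is continuous across the edges of $\tau$ after a suitable parametrisation of boundary values.

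Next I would apply the quasiconformal folding technique: thicken $\tau$ to a narrow neighbourhood $N_\tau$ and construct a quasiconformal homeomorphism $\phi:\D\to\D$ whose Beltrami coefficient is supported in $N_\tau$ and which corrects the mismatch in the derivative of $F$ across $\tau$, yielding a genuinely holomorphic $f:=F\circ\phi^{-1}$. The folding lemma needs to be executed so that $N_\tau$ can be taken arbitrarily thin (in the hyperbolic metric of $\D$), ensuring that $\phi$ extends to a homeomorphism of $\overline{\D}$ that is the identity on $E$ and moves every point of $\partial\D$ by an arbitrarily small chordal amount; in particular $\phi^{-1}(E)$ is still a nowhere-dense subset of $\partial\D$. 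To see that $f\in\Bdisc$, note that $CV(F)\subset\partial\D\cup\{F(\text{branch points in }W)\}$ is bounded by construction, folding adds at most critical values on the unit circle in the image, and the finite asymptotic values of $F$ all lie in $\overline{\D}$; these bounds transfer to $f$ since $F=f\circ\phi$.

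Finally I would verify that $f\notin\A$ by showing that $f$ has no asymptotic value at any $\zeta\in\partial\D\setminus\phi^{-1}(E)$, which gives an open set on $\partial\D$ free of asymptotic values. Fix such a $\zeta$ and an arbitrary boundary path $\Gamma$ ending at $\zeta$. Because the tracts $\phi^{-1}(T_n)$ accumulate on $\partial\D$ only in $\phi^{-1}(E)$, the tail of $\Gamma$ eventually remains in $\phi^{-1}(W)$, where $f$ covers $\overline{\D}$ infinitely often in every neighbourhood of $\zeta$; hence the cluster set of $f$ along $\Gamma$ contains $\overline{\D}$ and $f(\Gamma(t))$ cannot converge. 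I expect the principal obstacle to be exactly this last verification: ensuring by the geometry of $W$ and the tree $\tau$ that every boundary path through $\phi^{-1}(W)$ to $\zeta\notin\phi^{-1}(E)$ has full cluster set, and showing that this property is not destroyed by the folding correction. This requires a careful hyperbolic-geometric control of how $W$ twists near each $\zeta\in\partial\D\setminus E$, combined with an Ahlfors-type estimate to confirm that the thin support of the Beltrami coefficient is too small to rescue any asymptotic limit.
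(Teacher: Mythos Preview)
Your approach has a structural flaw that cannot be repaired within the outline you give. You arrange that the tracts $T_n$ (and the tree $\tau$) accumulate on $\partial\D$ only in the nowhere-dense set $E$, and that $F$ maps $W$ into $\overline{\D}$. But then for each $\zeta\in\partial\D\setminus\phi^{-1}(E)$ there is a genuine boundary neighbourhood $U\subset\D$ of $\zeta$ with $U\subset\phi^{-1}(W)$ (after the folding, $U$ may be taken disjoint from the thin support $N_\tau$ as well). On $U$ the holomorphic function $f$ is \emph{bounded}, so Fatou's theorem forces radial limits at almost every point of $\partial U\cap\partial\D$. Hence $f$ has asymptotic values on a dense subset of $\partial\D\setminus\phi^{-1}(E)$, and since $\phi^{-1}(E)$ is nowhere dense this gives $f\in\A$, contrary to your aim. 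The specific error in your final paragraph is the inference ``$f$ covers $\overline{\D}$ infinitely often in every neighbourhood of $\zeta$, hence the cluster set of $f$ along $\Gamma$ contains $\overline{\D}$'': surjectivity on neighbourhoods says nothing about the behaviour along an individual path.

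The paper's construction goes in the opposite geometric direction. Instead of confining the tract structure to a small set on $\partial\D$, it takes a \emph{single} tract $\Omega=\D\setminus S$ where $S$ is a spiral accumulating on \emph{all} of $\partial\D$, and applies the Bishop-type construction (Theorem~\ref{theo:construction}) to this $\D$--model. The resulting $f\in\Bdisc$ is unbounded but is bounded on the curve $\Gamma=\phi(\partial\Omega(1))$, which still accumulates on the whole of $\partial\D$; MacLane's \cite[Theorem~9]{MR0148923} then yields $f\notin\A$ immediately, with no path-by-path cluster-set analysis required. The key point you are missing is that for a $\Bdisc$ function to fall outside $\A$, the region where $|f|$ is small must interpose itself between the tract and $\partial\D$ near \emph{every} boundary point; a spiral achieves this, whereas any configuration whose tracts accumulate on a nowhere-dense set leaves $f$ bounded on full boundary neighbourhoods and hence in $\A$ by Fatou.
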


To prove this theorem, we first generalise a remarkable construction of Bishop \cite[Theorem 1.1]{MR3384512} from the complex plane to the disc. We then use this result to construct an unbounded function $f \in \Bdisc$ that is bounded on a spiral which accumulates on $\partial \D$. It then follows from \cite[Theorem 9]{MR0148923} that $f \notin \A$.

\begin{remarks}\normalfont
\mbox{ }
\begin{enumerate}
\item The use of Bishop's powerful method of construction seems to be essential here. Earlier examples of unbounded holomorphic functions in $\D$ that are bounded on a spiral were constructed using approximation theory. For example, using this approach, Barth and Schneider \cite{MR0247093} answered a question of Seidel by constructing a holomorphic function in $\D$ that is bounded on a spiral and whose only asymptotic value is infinity (but without any constraints on the critical points). On the other hand, Barth and Rippon \cite{MR1874247} constructed a holomorphic function in $\D$ that is bounded on a spiral with no critical points and asymptotic value infinity (but with no restrictions on the finite asymptotic values); see also \cite[Example 1]{MR2244003} for a similar example which satisfies $ff' \ne 0$ in $\D$.
\item We are grateful to Lasse Rempe-Gillen who, after this paper was completed, pointed out that a similar example to the one in Theorem~\ref{theo:finBnotA} was given by Bishop \cite[Section 14]{Bish3}. In fact, Bishop's example has no finite asymptotic values and only two singular values. Since Bishop gives very little detail, for the benefit of the reader we have retained an outline of the construction in the class $\Bdisc$.
\end{enumerate}
\end{remarks}
%
%

\subsection*{Structure}
The structure of the paper is as follows. First, in Section~\ref{S1}, we prove the first part of Theorem~\ref{theo:conditionsfornoarctracts} and then Theorem~\ref{theo:noglobaltracts}. In Section~\ref{S2} we prove the second part of Theorem~\ref{theo:conditionsfornoarctracts}, and in Section~\ref{S4} we prove Theorem~\ref{theo:moreconditions}. Finally, in Section~\ref{S3} and Section~\ref{S5} we prove Theorem~\ref{theo:finBnotA}.

%
%

\subsection*{Notation}
It is useful to define the right half-plane $\mathbb{H}_r = \{ z \in \C : \operatorname{Re}(z) > r \}$, for $r\in\R$. For simplicity we also write $\mathbb{H} = \mathbb{H}_0$. For $r>0$, we denote the open ball with centre at the origin by $B_r : = \{ z \in \C :|z| < r\}$, and the circle with centre at the origin by $C_r = \{ z \in \C : |z| = r\}$.

%
%

\subsection*{Note}
The definition of the class $\A$ is restricted to functions holomorphic in $\D$. It should be noted that this definition can be extended to a simply connected domain $U$ in an obvious way, and then we say that a function is \emph{in class $\A$ relative to $U$}. We also note that results about the class $\A$ in the disc can be generalised to any simply connected domain bounded by a Jordan curve via a Riemann map.
%
%
%
\section{Proof of the first part of Theorem~\ref{theo:conditionsfornoarctracts}, and Theorem~\ref{theo:noglobaltracts}}
\label{S1}
First in this section we give the proof of Theorem~\ref{theo:conditionsfornoarctracts} part (\ref{t1first}). In fact, we prove a slightly different result, and show that Theorem~\ref{theo:conditionsfornoarctracts} part (\ref{t1first}) is a consequence of this.

We use Iversen's classification of singularities, introduced in \cite{Iversen}. Suppose that $f$ is a function holomorphic in $\D$. We say that a tract $\{D(\epsilon),a\}$ is \emph{direct} if there exists $\epsilon > 0$ such that $f$ omits $a$ in $D(\epsilon)$; otherwise the tract is \emph{indirect}. If a tract $\{D(\epsilon),a\}$ is direct, and there exists $\epsilon > 0$ such that $f$ is a universal covering from $D(\epsilon)$ onto $U(a,\epsilon)\setminus\{a\}$, then we say the tract is \emph{direct logarithmic}; otherwise the tract is \emph{direct non-logarithmic}.
\begin{theorem}
\label{theo:directlog}
Suppose that $f \in \A$ and that $D = \{D(\epsilon), a\}$ is an arc tract of $f$. Then $D$ is a direct non-logarithmic tract over infinity.
\end{theorem}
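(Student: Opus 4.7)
The theorem makes three claims: the tract is over $\infty$, it is direct, and it is non-logarithmic. Directness is immediate once we know $a = \infty$, since $f$ is holomorphic in $\D$ and hence never attains the value $\infty$, so $\infty$ is automatically omitted in every $D(\epsilon)$. So the plan is to prove (i) $a = \infty$ and (ii) the tract is non-logarithmic.

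\textbf{Step 1: $a = \infty$.} I would argue by contradiction, assuming $a \in \C$. Then for all sufficiently small $\epsilon$, the chordal ball $U(a,\epsilon)$ is contained in a Euclidean ball $B(a,c\epsilon)$, so $f$ is bounded on $D(\epsilon_0)$ for some $\epsilon_0$, and in fact $f(z) \to a$ whenever $z$ approaches the end $K$ through a sequence $z_n \in D(\epsilon_n)$ with $\epsilon_n \to 0$. The topological heart of the step is to show that for every $\zeta$ in the interior of the arc $K$, any boundary path $\Gamma$ ending at $\zeta$ must eventually enter $D(\epsilon)$ for every $\epsilon > 0$: since $\zeta$ lies strictly between two subarcs of $K$, both contained in $\overline{D(\epsilon)}$, a crosscut argument in the component $D(\epsilon_0)$ shows that $\Gamma$ cannot approach $\zeta$ while staying in $\D\setminus D(\epsilon)$ for any $\epsilon$ small enough. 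Consequently, the only possible asymptotic value of $f$ at any interior point of $K$ is $a$. Since $f \in \A$, the set of points of $\partial\D$ at which $f$ has an asymptotic value is dense in $\partial \D$, hence dense in the interior of $K$, and at every such point the asymptotic value equals $a$. A uniqueness theorem of Plessner/Privalov type (after transporting to $\D$ by a conformal map of $D(\epsilon_0)$) then forces $f \equiv a$, contradicting $f$ being non-constant.

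\textbf{Step 2: non-logarithmic.} Suppose for contradiction the tract is direct logarithmic. Then for some $\epsilon$, the map $f\colon D(\epsilon)\to \{|w|>M\}$ is a universal covering, so $D(\epsilon)$ is simply connected and there is a conformal map $F\colon D(\epsilon)\to H$ onto a right half-plane with $f = \exp\circ F$ on $D(\epsilon)$. Under the prime-end correspondence for the simply connected domain $D(\epsilon)$, the end $K$ of the tract is the impression of the single prime end of $D(\epsilon)$ mapped by $F$ to $\infty\in\partial H$. The target of the plan is to show that $f\in\A$ forces this impression to be a single point, contradicting $K$ being a non-degenerate arc. The idea is that asymptotic curves ending at interior points of $K$ on which $f$ is bounded (which exist by the density characterization of $\A$) cannot lie in $D(\epsilon)$ eventually, because on $D(\epsilon)$ we have $|f| = e^{\operatorname{Re}F}\to\infty$ along any sequence approaching the prime end at $\infty$ in $H$; yet every path to a point in the interior of $K$ must eventually enter $D(\epsilon)$ by the same crosscut argument as in Step 1. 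This contradiction rules out the logarithmic case.

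\textbf{Main obstacle.} The essential technical point, used in both Step 1 and Step 2, is the topological claim that any boundary path $\Gamma$ with endpoint in the interior of the arc $K$ must eventually lie in every $D(\epsilon)$. Making this rigorous is the core difficulty: it requires a careful crosscut/prime-end analysis of $D(\epsilon_0)$, exploiting that each nested $D(\epsilon)$ has $K$ in its closure and that $K$ is a non-degenerate arc rather than a point. The remaining analytic ingredients---bounded analytic uniqueness in Step 1 and the half-plane covering structure in Step 2---are standard once this topological backbone is in place.
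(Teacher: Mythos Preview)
Your Step~2 contains a genuine gap. You write that ``asymptotic curves ending at interior points of $K$ on which $f$ is bounded \ldots\ exist by the density characterization of $\A$'', but $\A$ only guarantees \emph{some} asymptotic value at a dense set of boundary points, and that value may well be $\infty$. In fact it \emph{is} $\infty$: for an arc tract one has $\limsup_{z\to\zeta,\,z\in\Gamma}|f(z)|=\infty$ along every boundary path $\Gamma$ ending at any interior point $\zeta$ of $K$ (this is exactly the remark after MacLane's Theorem~3 that the paper invokes). So the bounded asymptotic curves you need for your contradiction do not exist, and the argument collapses. Your prime-end picture is also incomplete: the impression of a single prime end can perfectly well be a nondegenerate arc, so even if you established that $K$ is the impression of the prime end at $\infty$ under $F$, that alone is no contradiction.

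The paper's route for Step~2 is much shorter and avoids all prime-end analysis: it quotes MacLane's result (Lemma~\ref{lemm:maclane}\eqref{item:infcon} in the paper) that for $f\in\A$ with an arc tract, each $D(\epsilon)$ is \emph{infinitely connected}. In the logarithmic case one has a conformal $\phi:D(\epsilon_0)\to\mathbb{H}_{R_0}$ with $f=\exp\circ\phi$, so $D(\epsilon)=\phi^{-1}(\mathbb{H}_r)$ is simply connected with $\partial D(\epsilon)\cap\D=\phi^{-1}(\partial\mathbb{H}_r)$ a single curve. That is an immediate contradiction. Similarly, Step~1 in the paper is just a citation of MacLane's Theorem~4 (Lemma~\ref{lemm:maclane}\eqref{item:overinf}); your attempted reproof via a Privalov-type identity theorem is shaky, since you have neither boundedness of $f$ near $K$ nor a positive-measure set of radial limits---only a dense set of asymptotic values---so standard uniqueness theorems do not directly apply. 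The ``crosscut argument'' you allude to for the topological backbone is essentially the nontrivial content of MacLane's theorem itself, not a routine lemma.
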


Before proving Theorem~\ref{theo:directlog}, we show that Theorem~\ref{theo:conditionsfornoarctracts} part (\ref{t1first}) can be deduced easily from this result.

\begin{proof}[Proof of Theorem~\ref{theo:conditionsfornoarctracts} part (\ref{t1first})]
Suppose that $f\in\A$ and that $CV(f) \cup AV(f)$ is bounded. It follows from the fact that $CV(f) \cup AV(f)$ is bounded that there exists $\epsilon>0$ sufficiently small that $f$ is a universal covering from each component of $f^{-1}(U(\infty, \epsilon))$ onto $U(\infty, \epsilon)\setminus\{\infty\}$. Hence the only tracts of $f$ over infinity are direct logarithmic. The result then follows by Theorem~\ref{theo:directlog}.
\end{proof}

The proof of Theorem~\ref{theo:directlog} requires some auxiliary lemmas. The first combines some of MacLane's results \cite[Theorems~4~and~7]{MR0148923}, and is also used elsewhere in this paper. 
Here, if $K \subsetneq \partial \D$ is an arc, then we define a \emph{boundary neighbourhood} of $K$ as a domain $U = V \cap \D$ where $V$ is a simply connected domain such that $K$ is compactly contained in a crosscut of $V$.

\begin{lemma}
\label{lemm:maclane}
Suppose that $f \in \A$ has an arc tract $D = \{ D(\epsilon), a\}$ with end $K$. Then the following all hold.
\begin{enumerate}[(a)]
\item The tract $D$ is over infinity.\label{item:overinf} 
\item The function $f$ assumes every finite value infinitely many times in each boundary neighbourhood of each non-degenerate subarc of~$K$.\label{item:allvals}
\item Each set $D(\epsilon)$, $\epsilon > 0$, is infinitely connected.\label{item:infcon}
\end{enumerate}
\end{lemma}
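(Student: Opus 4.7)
The three statements are classical consequences of MacLane's work \cite{MR0148923}, and my plan is to invoke his Theorems~4 and~7 and supplement them with a few short topological arguments to arrive at the present formulation in terms of the nested tracts $\{D(\epsilon)\}$.

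For part (\ref{item:overinf}), I would argue by contradiction: if $a\in\C$, then $f$ is bounded on $D(\epsilon_0)$ for some $\epsilon_0>0$, whose closure meets $\partial\D$ in the non-degenerate arc $K$. Picking a proper subarc $K_0$ of $K$ and a boundary neighbourhood $V\subset D(\epsilon_0)$ of $K_0$, one may make $|f-a|$ arbitrarily small on $V$ after shrinking $V$ (using that $D(\epsilon)\searrow K$ as $\epsilon\to 0$). A Phragm\'en--Lindel\"of-type argument then forces $f\equiv a$ on $V$, hence on $\D$, contradicting $f\in\A$.

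For part (\ref{item:allvals}), I would appeal directly to MacLane's Theorem~7. After conformally mapping a boundary neighbourhood of a non-degenerate subarc $K_0\subset K$ onto $\D$ so that $K_0$ corresponds to a boundary arc, the restriction of $f$ remains in class $\A$, and by (\ref{item:overinf}) the arc tract now has value $\infty$; MacLane's theorem then states precisely that every finite value is attained infinitely often in this neighbourhood.

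Part (\ref{item:infcon}) I would derive from (\ref{item:allvals}). For any $w\in\C\setminus\overline{U(\infty,\epsilon)}$ there is a sequence $(z_n)\subset\D\setminus D(\epsilon)$ with $f(z_n)=w$ accumulating on $K\subset\overline{D(\epsilon)}$. Since arbitrarily close to $K$ both points of $D(\epsilon)$ and points of its complement accumulate, a planar interlacing argument places infinitely many $z_n$ in pairwise distinct components of $\widehat{\C}\setminus D(\epsilon)$. The main obstacle will be making this interlacing rigorous in the general arc tract setting; the other two parts then follow routinely from MacLane's theorems.
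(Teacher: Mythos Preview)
The paper gives no proof of this lemma at all: it simply records that the three assertions ``combine some of MacLane's results \cite[Theorems~4 and~7]{MR0148923}''. Your plan is to invoke exactly those two theorems, so at the level of what the paper actually does, your approach is identical.

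Your supplementary sketches go beyond what the paper supplies, and they are uneven. The appeal to MacLane's Theorem~7 for part~(b) is precisely right. Your outline for part~(a), however, has a real gap: the assertion that one can choose a boundary neighbourhood $V$ of $K_0$ with $V\subset D(\epsilon_0)$, and then shrink $V$ so that $|f-a|$ is uniformly small on $V$, is not justified. The set $D(\epsilon_0)$ is merely one component of $f^{-1}(U(a,\epsilon_0))$, and nothing prevents points of its complement from accumulating on $K_0$; the condition $\bigcap_\epsilon \overline{D(\epsilon)}=K$ does not say that $D(\epsilon)$ eventually contains a boundary neighbourhood of any subarc. MacLane's own proof of Theorem~4 proceeds instead through the characterisation $\A=\LL$ (level sets end at points), not via a Phragm\'en--Lindel\"of argument on a full boundary neighbourhood. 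For part~(c) you correctly flag the interlacing step as the obstacle, and as written it does not go through: infinitely many preimages accumulating on $K$ from outside $D(\epsilon)$ could in principle lie in a single complementary component. This too is subsumed in MacLane's results. Since the paper is content with the bare citation, simply citing Theorems~4 and~7 of \cite{MR0148923}, as you propose, matches the paper; but your independent sketches for (a) and (c) would need repair before they stood on their own.
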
 

We note that it follows from Lemma~\ref{lemm:maclane} part \eqref{item:overinf} that any function in the MacLane class which has an arc tract is unbounded. We also use the following standard result; this can be proved in an almost identical way to, for example, \cite[Theorem 5.10]{MR1185074}, which covers the case $r = 0$.
\begin{lemma}
\label{lemm:covering}
Suppose that $W\subset\C$ is a domain, that $r \in [0, 1)$, and that $$g:W\to\{z\in\C : r < |z| < 1\}$$ is an unbranched covering map. Then exactly one of the following holds:
\begin{enumerate}[(a)]
\item there exists a conformal map $\psi: W \to \{ z \in \C : \log r < \operatorname{Re}(z) < 0\}$ such that $g~=~\exp~\circ~\psi$;
\item there exists a conformal map $\psi: W \to \{ z \in \C : r^{1/m} < |z| < 1 \}$ such that $g = (\psi)^m$, for some $m\isnatural$.
\end{enumerate}
\end{lemma}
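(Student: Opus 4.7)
The plan is to classify $g$ via the universal cover of the target annulus $A := \{z \in \C : r < |z| < 1\}$ (a punctured disc when $r=0$). Set $S := \{z \in \C : \log r < \operatorname{Re}(z) < 0\}$, with the convention $\log 0 = -\infty$ so that $S$ is the left half-plane when $r=0$. The exponential $p(z) = e^z$ is then a universal covering $p : S \to A$, with deck transformation group generated by $\tau(z) = z + 2\pi i$.

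Since $S$ is simply connected and $g$ is an unbranched covering, the lifting criterion produces a holomorphic map $\tilde g : S \to W$ with $g \circ \tilde g = p$. Because $g$ and $p$ are both local biholomorphisms, so is $\tilde g$, and a routine verification (using evenly covered neighbourhoods for $g$ together with the preimage structure of $p$) shows that $\tilde g$ is itself a covering map, hence a universal covering of $W$. Its deck group $H$ embeds as a subgroup of $\langle \tau \rangle \cong \Z$, so $H$ is either trivial or $H = \langle \tau^m \rangle$ for some $m \in \N$.

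If $H$ is trivial, then $\tilde g$ is a holomorphic bijection, and $\psi := \tilde g^{-1} : W \to S$ is conformal with $g = \exp \circ \psi$, which is case~(a). If $H = \langle \tau^m \rangle$, then $\tilde g$ descends to a biholomorphism $S/H \to W$, and the map $z \mapsto e^{z/m}$ induces a biholomorphism $S/H \to A_m := \{z : r^{1/m} < |z| < 1\}$. Composing yields a conformal $\psi : W \to A_m$ determined by $\psi(\tilde g(z)) = e^{z/m}$, and since $\psi(\tilde g(z))^m = e^z = p(z) = g(\tilde g(z))$, we obtain $g = \psi^m$, which is case~(b). The two cases are mutually exclusive because $S$ is simply connected while every $A_m$ is doubly connected.

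The main (mild) obstacle is justifying that the holomorphic lift $\tilde g$ is actually a covering map rather than merely a local biholomorphism, so that the deck group $H$ is genuinely a subgroup of $\langle \tau \rangle$. This is a standard covering-space argument: any lift of a covering map along a covering is a covering, and the deck transformations of $\tilde g$ coincide with those elements of $\langle \tau \rangle$ that act trivially on the fibres of $\tilde g$. Once this is in place, the classification of subgroups of $\Z$ immediately gives the stated dichotomy, and the case $r = 0$ is handled uniformly by reading $\log 0$ as $-\infty$ throughout.
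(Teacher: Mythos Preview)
Your proof is correct and follows precisely the standard covering-space argument that the paper alludes to when it says the lemma ``can be proved in an almost identical way to, for example, \cite[Theorem 5.10]{MR1185074}''. The paper does not supply its own proof, so there is nothing further to compare; your lift-through-the-universal-cover argument, together with the classification of subgroups of $\Z$, is exactly the expected route.
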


\begin{proof}[Proof of Theorem~\ref{theo:directlog}]
Suppose that $f \in \A$ and that $f$ has an arc tract $D$ with end~$K$. It follows from Lemma~\ref{lemm:maclane} part \eqref{item:overinf} that $D = \{D(\epsilon), \infty\}$ is a tract over infinity. In particular, $D$ is a direct tract. It remains, therefore, to prove that $D$ is not a logarithmic tract. 

Suppose, by way of contradiction, that $D$ is a direct logarithmic tract. It follows that we can choose $\epsilon_0>0$ sufficiently small that 
$f : D(\epsilon_0) \to U(\infty, \epsilon_0)\setminus\{\infty\}$ is an unbranched covering map. 
Hence there exists $R_0\in\R$ such that the map $$g(z) = \frac{{e^{R_0}}}{f(z)}$$ is an unbranched covering map from $D(\epsilon_0)$ onto $\D\setminus\{0\}$, and so we can apply Lemma~\ref{lemm:covering} to $g$, with $W = D(\epsilon_0)$ and $r = 0$. Note that Lemma~\ref{lemm:covering} case (b) cannot hold, since it stands in contradiction to Lemma~\ref{lemm:maclane} part \eqref{item:infcon}. 
It follows that there is a conformal map $\phi : D(\epsilon_0) \to \mathbb{H}_{R_0}$ such that 
$$
f(z) = \exp(\phi(z)), \qfor z \in D(\epsilon_0).
$$

Choose $r > R_0$, and consider the set $T_r = \phi^{-1}({\mathbb{H}_r})$. Since there exists $\epsilon~\in~(0,~\epsilon_0)$ such that $f(T_r) = U(\infty, \epsilon)\setminus\{\infty\}$, we have that $T_r = D(\epsilon)$. We also have that $\phi^{-1}(\partial \mathbb{H}_r) = \partial T_r \cap \D$ is connected. This is also in contradiction to Lemma~\ref{lemm:maclane} part \eqref{item:infcon}, which completes the proof. 
\end{proof}

The proof of Theorem~\ref{theo:noglobaltracts} is now quite straightforward.
\begin{proof}[Proof of Theorem~\ref{theo:noglobaltracts}]
We prove the contrapositive, and so suppose that $f \in \A$ has a global tract. It is easy to see that $f$ can have no finite asymptotic values; this is an immediate consequence of, for example, \cite[Theorem 6]{MR0148923}. If $CV(f)$ was bounded, then we could deduce from Theorem~\ref{theo:conditionsfornoarctracts} part (\ref{t1first}) that $f$ has no arc tracts. Hence $CV(f)$ is unbounded, as required.
\end{proof}
%
%
%
\section{Proof of the second part of Theorem~\ref{theo:conditionsfornoarctracts}}
\label{S2}
This section is devoted to the proof of Theorem~\ref{theo:conditionsfornoarctracts} part (\ref{t1second}). In fact we prove a slightly stronger result. For any set $U \subset \mathbb{D}$, we let
$$
CV(f, U) = \{ f(z) : z \in U \text{ and } f'(z) = 0 \}.
$$

It is straightforward to see that Theorem~\ref{theo:conditionsfornoarctracts} part (\ref{t1second}) is an immediate consequence of the following.
\begin{theorem}
\label{theo:stronger}
Suppose that $f \in \A$ has an arc tract with end $K$, and that infinity is monotonically accessible at $\zeta_1$ and $\zeta_2$, which are interior points of $K$. Let $K'$ be the subarc of $K$ with endpoints $\zeta_1$ and $\zeta_2$, and suppose that $U \subset \mathbb{D}$ is a boundary neighbourhood of $K'$. Then $CV(f, U)$ is unbounded.
\end{theorem}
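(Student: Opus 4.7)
The plan is to argue by contradiction: suppose $CV(f,U)$ is bounded, so $|\zeta|\le M$ for every $\zeta\in CV(f,U)$. Choose $R>M$ large enough that each tail $L_i:=f(\Gamma_i)\cap\{|w|>R\}$ is a simple arc meeting every circle $C_r$ ($r>R$) in a single point, and such that the corresponding preimage tail $\Gamma_i':=\Gamma_i\cap\{|f|>R\}$ lies inside $U$ and runs from a point $q_i\in\{|f|=R\}$ to $\zeta_i$. Since each $L_i$ is a simple arc in the topological annulus $\{|w|>R\}$ from $C_R$ to $\infty$, every connected component of $\{|w|>R\}\setminus(L_1\cup L_2)$ is simply connected.

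Next I construct a Jordan domain $\Omega\subset U$ whose boundary consists of $K'$, $\Gamma_1'$, $\Gamma_2'$, and a simple arc $\alpha\subset U$ joining $q_1$ to $q_2$ chosen so that $|f|\le R$ on $\alpha$; such an $\alpha$ should exist because $f$ takes small values throughout $U$ by Lemma~\ref{lemm:maclane} part~\eqref{item:allvals}, so that $q_1$ and $q_2$ can be connected through the sublevel set $\{|f|\le R\}\cap U$. Pick a point $z_0\in\Omega$ close to $\Gamma_1'$, and let $S$ be the component of $\{|w|>R\}\setminus(L_1\cup L_2)$ containing $f(z_0)$. Let $W$ be the component of $\Omega\cap f^{-1}(S)$ containing $z_0$. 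Because $R>M$, $f'$ does not vanish on $\Gamma_1'\subset U$, so $f$ is a local diffeomorphism there and $W$ contains a full one-sided neighbourhood of $\Gamma_1'$ in $\Omega$; consequently $W$ accumulates on a non-degenerate subarc $K''\subset K'$ of $\partial\D$ adjacent to $\zeta_1$, making $W$ a boundary neighbourhood of $K''$.

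The boundary $\partial W\cap\D$ is contained in $\Gamma_1'\cup\Gamma_2'\cup\alpha\cup f^{-1}(L_1\cup L_2\cup C_R)$, and on each of these sets $f$ takes values in $\C\setminus S$. Hence as $z_n\in W$ exits compact subsets of $W$, $f(z_n)$ exits compact subsets of $S$, so $f\colon W\to S$ is proper. Since there are no critical points of $f$ in $W$, $f|_W$ is an unbranched proper covering onto $S$; as $S$ is simply connected, $f|_W$ is a conformal bijection onto $S$. In particular $f|_W$ omits every value $w_0\in\{|w|>R\}\setminus\overline{S}$, which contradicts Lemma~\ref{lemm:maclane} part~\eqref{item:allvals}, because $W$ is a boundary neighbourhood of the non-degenerate subarc $K''\subset K$.

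The main obstacles I anticipate are (i) constructing the base arc $\alpha\subset U$ with $|f|\le R$ on $\alpha$ (requiring that $q_1$ and $q_2$ be joined within the sublevel set via the density of small values guaranteed by the MacLane class), and (ii) verifying that $W$ genuinely contains a one-sided neighbourhood of $\Gamma_1'$ all the way to $\zeta_1$, which uses $\zeta_1$ being an interior point of $K$ together with $f'\neq 0$ on $\Gamma_1'$.
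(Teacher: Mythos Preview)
Your argument has a genuine gap at the step where you conclude that $W$ is a boundary neighbourhood of a non-degenerate subarc $K''\subset K'$. In fact this conclusion cannot hold: by construction $f(W)\subset S$, so $f$ omits on $W$ every value in $\C\setminus\overline S$, and Lemma~\ref{lemm:maclane}\eqref{item:allvals} then forbids $W$ from containing a boundary neighbourhood of any non-degenerate subarc of $K$. Your justification---that the $\Omega$-side of $\Gamma_1'$ lies in $W$ because $f$ is a local diffeomorphism along $\Gamma_1'$---is correct as far as it goes, but such a strip need only accumulate at the single point $\zeta_1$; it yields no boundary neighbourhood of any arc. Your obstacle~(ii) is thus not a technicality to be verified but a genuine obstruction: even a full one-sided collar of $\Gamma_1'$ all the way to $\zeta_1$ would not give what you need. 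Since your final contradiction is precisely the application of Lemma~\ref{lemm:maclane}\eqref{item:allvals} to $W$, the argument is circular at this point.

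The properness claim is also unjustified: when $z_n\in W$ tends to a point of $K'\subset\partial\D$, nothing in your setup forces $f(z_n)$ to leave compact subsets of $S$. But even granting properness and the resulting conformal bijection $W\to S$, you would still need the boundary-neighbourhood claim to reach a contradiction. (A minor point: your obstacle~(i) about $\alpha$ is easily circumvented by the paper's device---first fix any arc $C\subset U$ joining $\Gamma_1$ to $\Gamma_2$, then enlarge $R$ beyond $\max_C|f|$.)

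For comparison, the paper's proof avoids preimage components altogether and instead tracks a single Jordan level curve. Using Lemma~\ref{lemm:components}, a level curve $P(R)$ compactly contained in the domain $D$ bounded by $\Gamma_1$, a base arc $C$, $\Gamma_2$, and $K'$ is grown outward as $P(r)$, $r\ge R$, until it accumulates either at a critical point---necessarily outside $\overline D$ since $CV(f,U)\subset B_M$---or at $\partial\D$, necessarily off the interior of $K$ since level curves cannot accumulate on the end of an arc tract. Either way some $P(r)$ is a Jordan curve meeting $\Gamma_1$ or $\Gamma_2$ in two distinct points (it cannot cross $C$, where $|f|<R$), and this contradicts the monotonicity of $f(\Gamma_i)$.
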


The \emph{level sets} of a complex valued function $f$ are defined, for each $r>0$, by
\begin{equation*}
L(r) = \{ z : |f(z)| = r \}.
\end{equation*}
The connected components of a level set are called \emph{level curves}. The proof of Theorem~\ref{theo:stronger} depends on certain properties of the level curves of a holomorphic function, given in the following lemma. This result may be known, but we have not been able to trace a reference.
\begin{lemma}
\label{lemm:components}
Suppose that $f : \D \to \C$ is holomorphic and unbounded. Suppose also that $R > 0$, and that $P(R)$ is a component of $L(R)$ compactly contained in $\D$ and meeting no critical points of $f$. Then there exists $\tilde{R} > R$ such that, for $r \in [R, \tilde{R})$, there exist components $P(r)$ of $L(r)$, all Jordan curves, such that the map $r \mapsto P(r)$ is continuous (in the Hausdorff metric) and $P(r')$ surrounds $P(r)$, for $r' \in (r, \tilde{R})$, and also such that at least one of the following occurs:
\begin{enumerate}[(a)]
\item the curves $P(r)$ accumulate at a critical point of $f$ as $r \rightarrow \tilde{R}$;
\item the curves $P(r)$ accumulate at a point of $\partial \D$ as $r \rightarrow \tilde{R}$.
\end{enumerate}
\end{lemma}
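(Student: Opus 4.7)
The plan is to build the desired family by continuation from $P(R)$ as $r$ increases, to let $\tilde R$ be the supremum of those values to which the continuation extends, and then to verify that at this threshold at least one of (a), (b) must hold.

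For the local existence near $r = R$, the key observation is that $\nabla |f|^2 = 2\bar{f} f'$ is nonzero on $P(R)$: indeed $|f| = R > 0$ forces $f \ne 0$ there, and $f'$ is nonzero on $P(R)$ by hypothesis. Hence $\log|f|$ is smooth with nonvanishing gradient in a neighbourhood of $P(R)$, so $P(R)$ is a connected compact $C^\infty$ one-manifold without boundary embedded in $\D$, that is, a smooth Jordan curve. The implicit function theorem then supplies a tubular neighbourhood $N \cong P(R) \times (-\varepsilon,\varepsilon)$ foliated by the smooth Jordan curves $L(r) \cap N$ for $r$ close to $R$. Applying the maximum modulus principle to the bounded Jordan domain $\Omega(R)$ enclosed by $P(R)$, which is compactly contained in $\D$, gives $|f| < R$ on $\Omega(R)$, so the level curves for $r$ slightly larger than $R$ lie outside $\Omega(R)$ and therefore surround $P(R)$.

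Let $\tilde R \in (R, \infty]$ be the supremum of those $S$ for which a nested, continuously varying family $\{P(r)\}_{r \in [R,S)}$ of Jordan curve components of $L(r)$, each compactly contained in $\D$ and disjoint from the critical points of $f$, exists. Such a family is obtained by gluing local continuations, which are unique at each $r$ given the nesting constraint via the implicit function theorem. If $\tilde R = \infty$, then (b) is immediate, because $|f| \equiv r$ on $P(r)$ with $r$ arbitrarily large forces $P(r)$ to leave every compact subset of $\D$, so points of $P(r)$ subconverge to $\partial \D$. Otherwise suppose $\tilde R$ is finite and, for a contradiction, that both (a) and (b) fail. Then there exist a compact set $K \subset \D$ and a constant $\delta > 0$ such that $P(r) \subset K$ and every point of $P(r)$ is at distance at least $\delta$ from the critical points of $f$, for $r$ sufficiently close to $\tilde R$. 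It follows that $|f'|$ has a positive lower bound on a fixed neighbourhood of $\bigcup_{r < \tilde R} P(r)$, so the implicit function theorem yields uniform Hausdorff continuity of $r \mapsto P(r)$ as $r \to \tilde R^-$; hence $P(r)$ converges in Hausdorff metric to a compact connected set $P^* \subset L(\tilde R)$. The same local smoothness argument as in the first step shows that $P^*$ is itself a smooth Jordan curve component of $L(\tilde R)$, compactly contained in $\D$ and disjoint from the critical points of $f$; applying the local existence step to $P^*$ then extends the family to an interval $[R, \tilde R + \eta)$, contradicting the maximality of $\tilde R$.

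The main obstacle is this final step: showing that the Hausdorff limit $P^*$ is a genuine Jordan curve rather than, say, a self-touching curve, an arc with endpoints, or some more pathological compact connected subset of $L(\tilde R)$. What makes this go through is the combination of uniform Hausdorff continuity (which is required even to have a single limit) with the local foliation structure of $L(\tilde R)$ near $P^*$ supplied by the non-vanishing of $\nabla |f|$; together these force $P^*$ to be locally a smooth embedded arc and globally, being compact, connected and boundaryless, a topological circle.
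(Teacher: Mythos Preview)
Your argument is essentially correct, but it follows a genuinely different route from the paper's.

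The paper works in a logarithmic coordinate: it analytically continues the branch $h(t)=f^{-1}(e^t)$ from a neighbourhood of the line $\{\operatorname{Re}t=\log R\}$ into the largest possible vertical strip $S\subset\{\operatorname{Re}t>\log R\}$, invokes the classification of unbranched coverings of an annulus (Lemma~\ref{lemm:covering}) together with a Schwarz reflection argument to show that $S$ is a proper strip (so $\tilde R<\infty$), and then reads off (a) or (b) from the nature of the singular value of $f$ on the circle $|w|=\tilde R$ that obstructs further continuation. Your approach is purely real-variable: you use that $|\nabla|f||=|f'|$ is nonzero along the level curves to run the implicit function theorem, continue the family $P(r)$ as far as possible, and then argue by compactness that failure of both (a) and (b) would force a smooth Jordan-curve Hausdorff limit $P^\ast\subset L(\tilde R)$, from which the family could be continued past $\tilde R$.

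Your method is more elementary and self-contained (no covering theory, no reflection), and the final paragraph correctly identifies and handles the one delicate point, namely that the Hausdorff limit $P^\ast$ is a genuine embedded circle rather than a degenerate continuum; the combination of the uniform lower bound on $|f'|$ near $P^\ast$ with the local $1$--manifold structure of $L(\tilde R)$ does give this. One small difference in output: the paper's argument actually proves $\tilde R<\infty$, whereas your definition of $\tilde R$ as a supremum allows $\tilde R=\infty$, which you then dispose of via boundedness of $f$ on compacta. For the application in the paper (the proof of Theorem~\ref{theo:stronger}) this distinction is immaterial, since only the dichotomy (a)/(b) is used there; but if you want the lemma exactly as stated with $\tilde R$ a real number, you would need an extra step to rule out $\tilde R=\infty$, which is where the paper's Schwarz reflection argument does real work.
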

\begin{proof}

Let $\Omega$ be a neighbourhood of $P(R)$, compactly contained in $\D$, such that $f$ is locally univalent and $m$-to-$1$ on $\Omega$, for some $m \in\N$. Choose $z_0 \in \Omega$ such that $w_0 = f(z_0)$ satisfies $|w_0| > R$. Let $H = \{ t \in \C : \operatorname{Re } t > \log R \}$, and take $t_0 \in H$ such that $e^{t_0} = w_0$. Then there exists a vertical open strip $S_0$, containing $t_0$ and $L = \{ t \in \C : \operatorname{Re } t = \log R\}$, such that $\exp(S_0) \subset f(\Omega)$. It follows that the branch of the function $h(t) = f^{-1}(\exp(t))$ that maps $t_0$ to $z_0$ can be analytically continued throughout $S_0$.

Now we let $S$ be the maximal vertical open strip with $L \subset \partial S$ to which $h$ can be analytically continued. We claim first that $S \ne H$. For if $S = H$, then there is a domain $W \subset \D$ such that the map $g(z) = R/f(z)$ is an unbranched covering map from $W$ onto $\D\setminus\{0\}$. Hence we can apply Lemma~\ref{lemm:covering} to $g$, with $r = 0$. Note that Lemma~\ref{lemm:covering} case (a) cannot hold, since $f$ maps $P(R)$ in an $m$-to-$1$ manner onto $\{ w \in \C : |w| = R \}$. If Lemma~\ref{lemm:covering} case (b) holds, then $W$ is doubly connected and so $\partial W = P(R) \cup \partial \D$. In this case the function $g$, which is analytic in $W$, approaches zero at each point of $\partial \D$, and so can be continued analytically across $\partial \D$ by Schwarz reflection. It follows that $g$ is identically zero, by the identity theorem. This contradiction concludes the proof of our claim.

It follows that the maximal vertical strip $S$ is of the form 
$$
S = \{ t \in \C : \log R < \operatorname{Re } t < \log \tilde{R} \}, \qfor \text{some } \tilde{R} > R.
$$
We put $W = h(S)$. It can then be seen, by a second application of Lemma~\ref{lemm:covering}, that $f$ must be of the form $f = (\psi)^m$, where $\psi : W \to \{ w \in \C : R^{1/m} < |w| < \tilde{R}^{1/m} \}$ is univalent.

Now let $P(r) = h(\{ t \in \C : \operatorname{Re } t = \log r \})$, for $r \in (R, \tilde{R})$. Since the strip $S$ is maximal, the circle $\{ w \in \C : |w| = \tilde{R}\}$ must contain a singular value, $w'$, of $f$, associated with the failure of analytic continuation of the corresponding inverse branch of $f$ along a path in $\{w\in\C: R<|w|<\tilde{R}\}$. If $w'$ is a critical value of $f$, then the components $P(r)$, $r \in (R, \tilde{R})$, lying in $W$ tend to a critical point of $f$, with critical value $w'$, as $r \to \tilde{R}$. This is case (a). Otherwise, $w'$ is an asymptotic value of $f$ arising from a path tending to $\partial \D$ from within $W$, and it follows that the components $P(r)$, $r \in (R, \tilde{R})$, accumulate at at least one point of $\partial \D$ as $r \rightarrow \tilde{R}$. This is case (b). The remaining results of the lemma follow easily.
\end{proof}
%
%
\begin{proof}[Proof of Theorem~\ref{theo:stronger}]
Suppose that $f \in \A$ has an arc tract with end $K$, and that $\zeta_1$, $\zeta_2$ and $K'$ are as in the statement of the theorem. For each $i \in \{1, 2\}$, let $\Gamma_i$ be an asymptotic curve over infinity, ending at $\zeta_i$, such that $f(\Gamma_i)$ is monotonic. By way of contradiction, suppose that there exist a boundary neighbourhood $U$ of $K'$ and a real number $M>0$ such that 
\begin{equation}
\label{eq:constraint}
CV(f, U) \subset B_M.
\end{equation} 

Reducing $\Gamma_1$ and $\Gamma_2$ if necessary, we can assume that $\Gamma_1 \cup \Gamma_2 \subset U$. Let $C$ be a simple curve in $U$ that joins $\Gamma_1$ and $\Gamma_2$ so that $\Gamma_1 \cup C \cup \Gamma_2$ is a crosscut of $\D$, and let $D$ be the boundary neighbourhood bounded by $\Gamma_1 \cup C \cup \Gamma_2 \cup K'$. See Figure~\ref{fig1} for an illustration of these sets.

\begin{figure}
	\includegraphics[width=14cm,height=10cm]{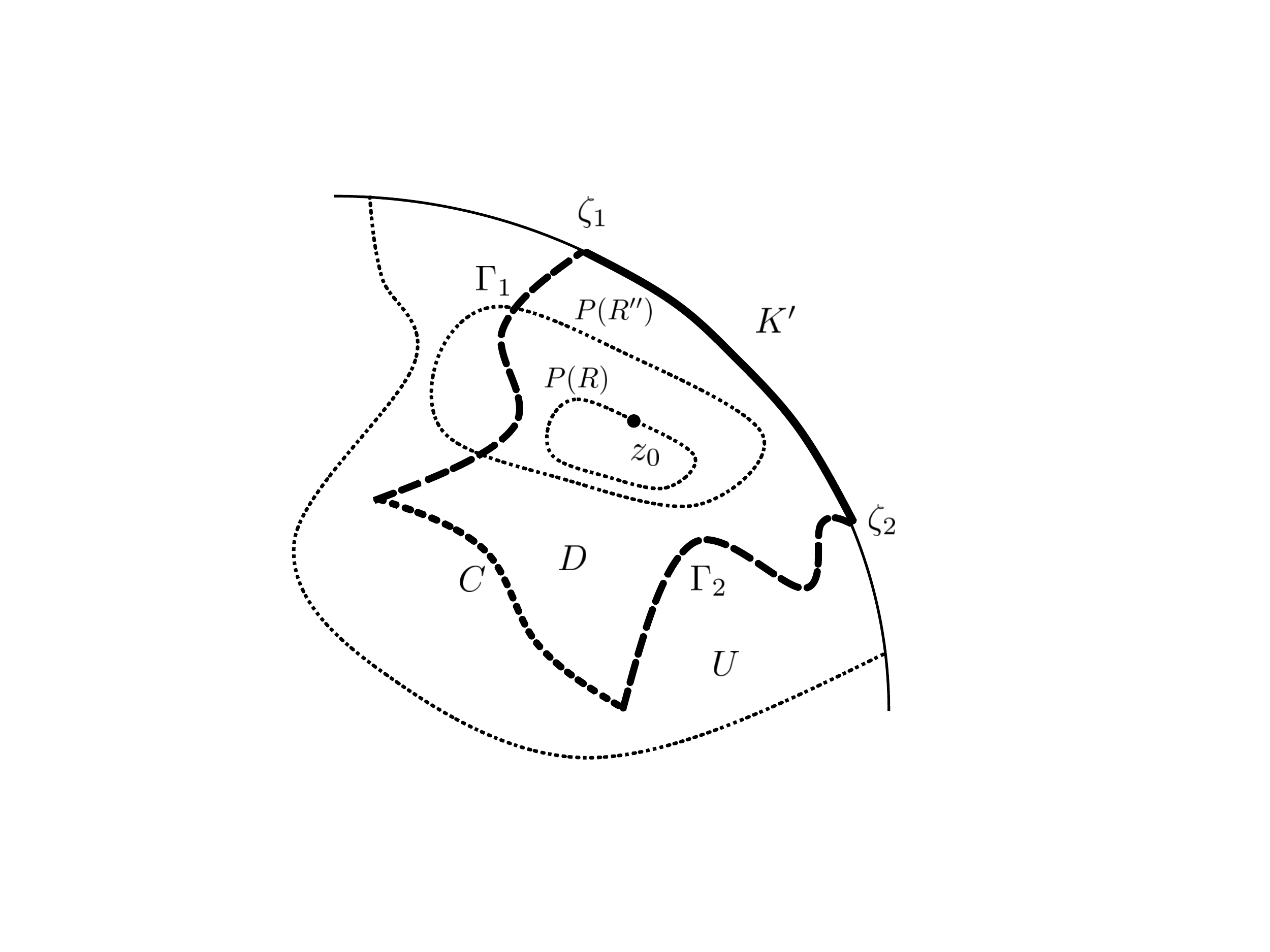}
  \caption{An illustration of the sets in the proof of Theorem~\ref{theo:stronger}}\label{fig1}
\end{figure}

Choose a real number $R > M$ sufficiently large that $|f(z)| < R$, for $z \in C$. We can also assume that $R$ has the property that $f(\Gamma_i)$ has exactly one point of modulus $r$, for $r > R$ and $i \in \{1, 2\}$. Since $f$ has only countably many critical values, we can also assume that $f$ has no critical values of modulus $R$. It follows that each component of $L(R)$ is either a Jordan curve or a curve tending to $\partial \D$ at both ends.

Fix a value $w_0 \in \C$ such that $|w_0| = R$. By Lemma~\ref{lemm:maclane} part \eqref{item:allvals}, there is a point $z_0 \in D$ such that $f(z_0) = w_0$. Let $T_0$ be the component of $L(R)$ containing $z_0$. We claim that we can assume that $T_0$ is contained in $D$. Note that $\overline{T_0}$ cannot meet $K'$ (see \cite[Remark after Theorem 3]{MR0148923}) since $K$ is the end of an arc tract over infinity. It will follow from this claim that $T_0$ is a Jordan curve.

To prove the claim, suppose that $T_0$ is not contained in $D$. Since $\overline{T_0}$ cannot meet $K'$, we can use Lemma~\ref{lemm:maclane} part \eqref{item:allvals} again to choose a point $z_1 \in D \setminus T_0$ such that $f(z_1) = w_0$. Let $T_1$ be the component of $L(R)$ containing $z_1$. If $T_1$ is also not contained in $D$, then we use Lemma~\ref{lemm:maclane} part \eqref{item:allvals} a third time to choose a point $z_2 \in D \setminus (T_0 \cup T_1)$ such that $f(z_2) = w_0$. Let $T_2$ be the component of $L(R)$ containing $z_2$. Suppose that $T_2$ is also not contained in $D$. 

We now have three disjoint curves in $\D$, each of which meets $D$ but is not contained in $D$, and so meets $\partial D$. Note that none of these curve can meet $C$, by the choice of $R$. 
It follows that each curve meets either $\Gamma_1$ or $\Gamma_2$. Without loss of generality, we can assume that $T_0$ and $T_1$ each meet $\Gamma_1$, and, moreover, at distinct points of $\Gamma_1$ (by \eqref{eq:constraint}). A contradiction then follows from the fact that $f(\Gamma_1)$ is monotonic. This proves our claim that we can assume that $T_0$ is a Jordan curve contained in $D$, since we can replace $z_0$ with $z_1$ or $z_2$ if necessary.

We complete the proof by showing that we can choose a different level curve which meets $\Gamma_i$ in two distinct points, for some $i \in \{1, 2\}$, which is a contradiction, since $f(\Gamma_i)$ is monotonic. Using the notation of Lemma~\ref{lemm:components}, we let $P(R) = T_0$. It follows by Lemma~\ref{lemm:components} that, for $r>R$ sufficiently close to $R$, we can let $r \mapsto P(r)$ be the continuous map to a component of $L(r)$. We claim that we can choose $r > R$ such that $P(r)$ is a Jordan curve which does not lie in $\overline{D}$ and meets either $\Gamma_1$ or $\Gamma_2$ in two points, which gives the required contradiction. We consider the two cases in the conclusion of Lemma~\ref{lemm:components}.
\begin{enumerate}[(a)]
\item Suppose that there is a value $\tilde{R} > R$ such that $P(r)$ accumulates at a critical point of $f$ as $r\rightarrow \tilde{R}$, and $P$ is continuous on $(R, \tilde{R})$. Since $\overline{D}\cap\D \subset U$, it follows from equation \eqref{eq:constraint} that this critical point must lie outside $\overline{D}$. The claim follows.
\item Suppose that there is a value $\tilde{R} > R$ such that $P(r)$ accumulates at a point of $\partial \D$ as $r \rightarrow \tilde{R}$, and $P$ is continuous on $(R, \tilde{R})$. Since $\overline{P(\tilde{R})}$ cannot meet the interior of $K$, the claim follows.
\end{enumerate}
This completes the proof.
\end{proof}
%
%
%
\section{Proof of Theorem~\ref{theo:moreconditions}}
\label{S4}
In this sections we prove Theorem~\ref{theo:moreconditions}. To do this we need to use MacLane's two alternative characterizations of class $\A$, and these require a little additional terminology. Suppose that $S \subset \D$. For each $\epsilon \in (0, 1)$, we let $\delta(\epsilon)$ denote the supremum of the diameters of the components of $S \cap \{ z \in \D : 1-\epsilon < |z| < 1\}$; if this intersection is empty, then we set $\delta(\epsilon) = 0$. We say that $S$ \emph{ends at points} if $\delta(\epsilon) \rightarrow 0$ as $\epsilon\rightarrow 1$. 

A non-constant holomorphic function~$f$ defined in $\D$ is said to be:
\begin{enumerate}[(a)]
\item in class $\B$ if there exists a set of boundary paths in~$\D$, with endpoints dense in $\partial \D$, on each of which the function $f$ is either bounded or has asymptotic value infinity;
\item in class $\LL$ if each level set $L=\{z\in\D:|f(z)|=\lambda\}$ ends at points. 
\end{enumerate}

%

MacLane \cite[Theorem~1]{MR0148923} showed that $\A=\B=\LL$. Using the fact that $\A~=~\LL$, MacLane and Hornblower each gave sufficient growth conditions for $f\in \mathcal A$. MacLane's sufficient condition \cite[Theorem~14]{MR0148923} is
$$
\int_0^1(1-r)\log^+ |f(re^{i\theta}|\,dr <\infty,\quad \text{for a dense set of }\theta\in(0,2\pi),
$$
which is best possible; see \cite[Theorem~1.1]{MR1956145}. Hornblower's sufficient condition \cite[Theorem~1]{MR0296305} is
\begin{equation}\label{Horn}
\int_0^1\log^+\log^+ M(r,f)\,dr<\infty,
\end{equation}
where
\[
M(r,f)=\max\{|f(z)|:|z|=r\}, \;\text{for } 0<r<1,
\]
which is essentially best possible; see \cite[Theorem 10.21]{MR1049148}.

%

Both parts of the proof of Theorem~\ref{theo:moreconditions} use the following lemma, which has its origins in an argument of MacLane \cite[page 284]{MR0274765}.
\begin{lemma}\label{parts}
Suppose that $f$ is holomorphic in $\D$ and that $\Gamma$ is a simple curve in $\D$ with endpoints $z_0$ and $z_1$, where we allow that $z_1 \in \partial \D$, and that $f'$ has no critical points in the interior of $\Gamma$. Suppose finally that there exists $c \in [0,1)$ such that $f'(\Gamma)$ is the ray with initial point $f'(z_0)$ and final point $cf'(z_0)$. Then
\begin{equation}\label{zn2}
|f(z_0)-f(z_1)|\le 2|f'(z_0)|,
\end{equation}
where, in the case when $z_1\in \partial \D$, the value $f(z_1)$ is taken to be the limiting value of $f(z)$ as $z \to z_1$ along $\Gamma$. 
\end{lemma}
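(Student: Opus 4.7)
The plan is to reparametrize $\Gamma$ by the values of $f'$ and then apply integration by parts. Set $\alpha=f'(z_0)$, so the hypothesis says $f'(\Gamma)$ is the line segment $\{s\alpha : s \in [c,1]\}$, with $f'(z_0)=\alpha$ and (taking limits if necessary) $f'(z_1)=c\alpha$.

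The first step is to verify that $f'$ restricted to $\Gamma$ is a homeomorphism onto this segment. Writing $f'(\Gamma(t))=\sigma(t)\alpha$ for a continuous function $\sigma:[0,1]\to[c,1]$ (where $\Gamma$ is parametrized so $\Gamma(0)=z_0$, $\Gamma(1)=z_1$), the hypothesis that $f''$ does not vanish on the interior of $\Gamma$ implies that $\sigma$ is locally injective on $(0,1)$. A continuous, locally injective, real-valued function on an interval is strictly monotonic, so $\sigma$ decreases strictly from $1$ to $c$. Hence $f'|_\Gamma$ admits a continuous inverse $u\mapsto z(u)$ defined on $[c\alpha,\alpha]$, with $z(\alpha)=z_0$, $z(c\alpha)=z_1$, and $z'(u)=1/f''(z(u))$ on the open segment.

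Next I would change variables and integrate by parts:
\[
f(z_1)-f(z_0)=\int_\Gamma f'(z)\,dz=\int_{\alpha}^{c\alpha} u\,z'(u)\,du = c\alpha\,z_1-\alpha\,z_0-\int_\alpha^{c\alpha} z(u)\,du.
\]
The conclusion then follows from the elementary bounds $|z_0|<1$, $|z_1|\le 1$, and $|z(u)|\le 1$ throughout, together with the fact that the segment from $\alpha$ to $c\alpha$ has length $(1-c)|\alpha|$:
\[
|f(z_1)-f(z_0)|\le c|\alpha|+|\alpha|+(1-c)|\alpha|=2|\alpha|=2|f'(z_0)|.
\]

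The main technical point requiring care is the boundary case $z_1\in\partial\D$. Here $f$ extends continuously to $\overline{\Gamma}$ by the hypothesis defining $f(z_1)$ as the limit along $\Gamma$, and $f'$ extends continuously with $f'(z_1)=c\alpha$ because $f'(\Gamma)$ has $c\alpha$ as a limit point. With these continuous extensions the parametrization and the integration by parts are unaffected, and $|z_1|\le 1$ still holds, so the estimate above remains valid.
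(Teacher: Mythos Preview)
Your proof is correct and is essentially the same argument as the paper's: both reparametrize $\Gamma$ by the value of $f'$ along the segment $[c\alpha,\alpha]$, integrate by parts, and bound using $|z|\le 1$ in $\overline{\D}$. The only cosmetic difference is that the paper uses an affine parameter $t\in[0,1]$ with $f'(\phi(t))=f'(z_0)(c+t(1-c))$ rather than the variable $u=f'(z)$ directly; your version is slightly more explicit in justifying why $f'|_\Gamma$ is a homeomorphism onto the segment, which the paper simply asserts.
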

\begin{proof}
Since $f'$ has no critical points on $\Gamma$, we can define a parametrization $\phi(t)$ of $\Gamma$ such that
\[
f'(\phi(t))=f'(z_0)(c+t(1-c)), \qfor 0\le t\le 1,
\]
so $\phi(1) = z_0$ and $\phi(0) = z_1$. Suppose first that $z_1\in \D$. It follows that
\begin{align*}
f(z_0)-f(z_1)&=\int_0^1f'(\phi(t))\phi'(t)\,dt\\
&=f'(z_0)\int_0^1(c+t(1-c))\phi'(t)\,dt\\
&=f'(z_0)\left(z_0-cz_1-(1-c)\int_0^1\phi(t)\,dt\right),
\end{align*}
where in the final step we have integrated by parts. Equation \eqref{zn2} now follows easily. The case when $z_1\in \partial \D$ follows by a straightforward limiting process.
\end{proof}
\begin{proof}[Proof of Theorem~\ref{theo:moreconditions}]
Suppose that~$f$ is holomorphic in $\D$ and that $CV(f)$ is bounded. To prove part (a) we show that if $f\in{\mathcal A}$ and $f$ has no arc tracts, then $f'\in \A$. If, on the contrary, $f'\notin \A$, then there is a non-degenerate closed subarc~$\gamma$ of $\partial \D$ such that $f'$ does not have an asymptotic value at any point of~$\gamma$. We show that this assumption leads to a contradiction.

We claim first that we can choose a sequence $(z_n)_{n\in\N}$ in $\D$ such that $z_n\to \zeta$ as $n\to\infty$, where~$\zeta$ is an interior point of~$\gamma$, and such that
\begin{equation}\label{zn}
f(z_n)\to \infty\;\;\text{as }n\to\infty\quad\text{and \quad the sequence } (f'(z_n))_{n\in\N} \text{ is bounded.}
\end{equation}

We prove \eqref{zn} as follows. Suppose that $U \subset \D$ is any Jordan domain such that $\partial U \cap \partial \D = \gamma$. Then $f'$ is not in class $\A$ relative to $U$. Since $\A = \LL$, it follows that $f'$ is not in class $\LL$ relative to $U$. Since a level set of $f'$ can only accumulate in $\gamma$, it follows that there exist $\lambda > 0$ and a non-degenerate closed subarc~$\gamma'$ of the interior of $\gamma$ at which a level set $$L=\{z\in\D: |f'(z)|=\lambda\}$$ accumulates. 

It follows that, without loss of generality, we can take a sequence of curves $L_n\subset L$ such that $L_n \to \gamma'$ as $n\to\infty$; see \cite[page~10]{MR0148923}. Suppose, by way of contradiction, that $f$ is bounded on $\cup_{n=1}^{\infty}L_n$. Then $f$ is bounded on a boundary neighbourhood of each interior point of~$\gamma'$, since $f\in \A$. Hence $f'$ does not grow too quickly near interior points of~$\gamma'$, by Cauchy's estimate; in particular there is a bound on $|f'(z)|$ of the form $O((1-|z|)^{-1})$ as $|z|\to 1$ there. It follows by \eqref{Horn}, applied locally, that $f'$ must have asymptotic values at a dense subset of~$\gamma'$. This contradicts our assumption about~$\gamma$. Hence, $f$ is unbounded on $\cup_{n=1}^{\infty}L_n$, and so property \eqref{zn} follows.

Suppose that $n \in \N$. Without loss of generality we can assume that there are no critical values of $f'$ on the radial ray with initial point $f'(z_n)$ and final point $0$ (except possibly at $0$). We let $c_n\in [0,1)$ be the smallest value such that there is a simple curve $\Gamma_n \subset \D$, with one endpoint at $z_n$, such that $f'(\Gamma_n)$ is the radial ray from $f'(z_n)$ to $c_nf'(z_n)$. 


By Lemma~\ref{parts}, we have that
\begin{equation}\label{zn3}
|f(z_n)-f(z)|\le 2|f'(z_n)|,\qfor z\in \Gamma_n.
\end{equation}
Thus, by \eqref{zn} and the fact that $CV(f)$ is bounded, we can assume that $\Gamma_n$ does not end at a critical point of $f$. We deduce that $\Gamma_n$ accumulates at $\partial \D$.

We claim that none of the curves $\Gamma_n$ can accumulate on a non-degenerate subarc of~$\gamma$. Suppose, by way of contradiction, that $\gamma_n$ is such a subarc. Note that $f$ is bounded on each $\Gamma_n$, by \eqref{zn3}. Since $f\in\A$, we can deduce by the maximum principle that~$f$ is bounded in a boundary neighbourhood of each interior point of $\gamma_n$. Thus, by Cauchy's estimate again and \eqref{Horn}, the function $f'$ has asymptotic values at a dense subset of $\gamma_n$, contrary to our original assumption. This proves our claim.

Since $f'$ has no asymptotic values on $\gamma$, it follows that each~$\Gamma_n$ must accumulate at at least one point of $\partial \D$ lying outside the interior of~$\gamma$. Since $z_n\to\zeta$ as $n\to\infty$, where $\zeta$ is an interior point of~$\gamma$, we can assume, by taking a subsequence if necessary, that $\Gamma_n\to \Gamma$ as $n\to\infty$, where $\Gamma$ is a non-degenerate subarc of $\partial \D$ which has non-degenerate intersection, $\gamma''$ say, with~$\gamma$. Hence, by \eqref{zn} and \eqref{zn3}, the function~$f$ must have an arc tract for infinity with end including $\gamma'' $ (see \cite[Theorem~3]{MR0148923}). This contradicts our hypothesis about~$f$. \\

Next we show that if $f'\in{\mathcal A}$ and $f'$ has no arc tracts, then $f\in \A$ and~$f$ has no arc tracts. Suppose, for a contradiction, that $f\notin \A$. Then, since $\A=\B$, there is a non-degenerate arc $\gamma\subset \partial \D$ each point of which is {\em not} the endpoint of any path in $\D$ on which $f$ is either bounded or tends to infinity. Once again, we claim that we can choose a sequence $(z_n)_{n\in\N}$ in $\D$ such that $z_n\to \zeta$, where $\zeta$ is an interior point of~$\gamma$, and such that
\begin{equation}\label{zn4}
f(z_n)\to \infty\;\;\text{as }n\to\infty\quad\text{and \quad the sequence } (f'(z_n))_{n\in\N} \text{ is bounded.}
\end{equation}
Indeed, since $f'\in\A$, $f'$ has no arc tracts, and $\A = \B$, there must be a non-degenerate subarc~$\gamma'$ of~$\gamma$ on which at least one of the following occurs:
\begin{enumerate}[(a)]
\item there exist paths approaching points of a dense subset of~$\gamma'$ on each of which $f'$ is bounded;
\item $f'$ has distinct point tracts for infinity ending at points of a dense subset of~$\gamma'$; for each such point of $\gamma'$ the components of $$\{z\in\D: r < |z| < 1, \ |f'(z)|>L \}$$ that define the tract must have diameters that tend to $0$ as $ L \to \infty$, so there exist level curves of~$f'$ ending at a dense set of points of~$\gamma'$.
\end{enumerate}
In either case, \eqref{zn4} holds, because~$f$ is not bounded on any path ending at a point of~$\gamma'$.

Now let $(\Gamma_n)_{n\in\N}$ be the sequence of paths, with initial points $(z_n)_{n\in\N}$, introduced in the first part of the proof. Once again, by \eqref{zn4}, we can assume that each path $\Gamma_n$ does not end at a critical point of~$f$, so it must accumulate at $\partial \D$. In fact, each $\Gamma_n$ must end at a point $\zeta_n\in\partial \D$ for otherwise $f'$ would have an arc tract for the value $c_nf'(z_n)$ (the asymptotic value of $f'$ along $\Gamma_n$), which is impossible by our initial hypothesis (or alternatively by Lemma~\ref{lemm:maclane} part \eqref{item:overinf}). By our assumption about $\gamma$ and \eqref{zn3}, the points $\zeta_n$ cannot lie in~$\gamma$.

Once again we can assume that $\Gamma_n\to \Gamma$ as $n\to\infty$, where $\Gamma$ is a non-degenerate subarc of $\partial \D$. Since $f'\in \A$ and $f'$ is uniformly bounded on $\cup_{n=0}^{\infty}\Gamma_n$, by \eqref{zn4}, we deduce that $f'$ is bounded in a boundary neighbourhood of each interior point of $\Gamma$, so~$f$ has asymptotic values at points of a dense subset of $\Gamma$, and hence at some point of $\gamma$ (since $\zeta$ lies in the interior of $\gamma$), a contradiction. Hence $f\in\A$.

To complete the proof of Theorem~\ref{theo:moreconditions}, we note that if~$f$ has an arc tract for infinity with end~$\gamma$, then for each path $\Gamma$ in $\D$ with an endpoint $\zeta$ in the interior of~$\gamma$, we have $\limsup_{z\to\zeta, z\in\Gamma}|f(z)|=\infty$; see \cite[Remark after Theorem~3]{MR0148923}. This fact, taken together with the existence of a non-degenerate subarc $\gamma'$ of $\gamma$ on which one of the two possible cases for $f'$ listed above occurs, indicates that \eqref{zn4} must again hold. We can then obtain the required contradiction as in the previous two paragraphs.
\end{proof}
%
%
%
\section{Proof of Theorem~\ref{theo:finBnotA}}
\label{S3}
In the final two sections we prove the following result.
\begin{theorem}
\label{theo:fspiral}
There is an unbounded function $f \in \Bdisc$, which is bounded on a spiral $\Gamma \subset \D$ that accumulates on the whole of $\partial \D$.
\end{theorem}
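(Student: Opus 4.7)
The plan is to adapt, from the plane to the disc, Bishop's construction \cite{MR3384512} of entire functions in the Eremenko-Lyubich class with prescribed tract geometry. The disc version of Bishop's theorem would be stated and proved separately in Section~\ref{S5}; granting this, the proof of Theorem~\ref{theo:fspiral} reduces to specifying an appropriate geometric model in $\D$ and reading off the properties of the resulting holomorphic function.

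The model I have in mind consists of a simple analytic spiral $\Gamma \subset \D$ whose closure equals $\Gamma \cup \partial \D$, together with a family of pairwise disjoint, simply connected ``tract'' subdomains $T_k$ of $\D$ arranged so that the closures of the $T_k$ are disjoint from a full neighbourhood of $\Gamma$, while their union accumulates on $\partial \D$. On each $T_k$ I would fix a conformal map $\phi_k : T_k \to \mathbb{H}$ onto the right half-plane, and set $F = \exp \circ \phi_k$, so that $F$ is an unbranched universal covering of $T_k$ onto $\{w \in \C : |w| > 1\}$. On $\D \setminus \bigcup_k T_k$, which contains a neighbourhood of $\Gamma$, the function $F$ would be taken to be uniformly bounded.

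The delicate step is to interpolate $F$ across the boundary arcs $\partial T_k \cap \D$ so that it extends to a continuous quasiregular map $F : \D \to \C$ with uniformly bounded complex dilatation and, crucially, with all critical and asymptotic values contained in a bounded subset of $\C$. This is precisely the setting in which Bishop's folding technique applies, and the disc version to be established in Section~\ref{S5} would supply the required extension. Once $F$ is in hand, I would apply the measurable Riemann mapping theorem on $\D$ to produce a quasiconformal self-homeomorphism $\psi : \D \to \D$ whose complex dilatation equals that of $F$, and set $f = F \circ \psi^{-1}$. Then $f$ is holomorphic on $\D$, the image $\psi(\Gamma)$ is again a curve accumulating on all of $\partial \D$ (since $\psi$ extends to a homeomorphism of $\overline{\D}$) on which $f$ is bounded, and $f$ is unbounded because $F$ is unbounded on each $T_k$. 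Moreover $CV(f) \cup AV(f) = CV(F) \cup AV(F)$ is bounded by construction, so $f \in \Bdisc$.

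The main obstacle is the execution of Bishop's folding construction in the disc. Bishop's planar argument relies on careful quasiconformal interpolation adjacent to each edge of a skeleton graph and on uniform bounds that control the complex dilatation; in the disc I must verify that these bounds do not degenerate as the tracts accumulate at $\partial \D$, that the resulting Beltrami coefficient on $\D$ satisfies $\|\mu\|_\infty < 1$, and that all new critical and asymptotic values introduced by the folding lie in a bounded set. Once these estimates are in place, Theorem~\ref{theo:finBnotA} follows immediately from Theorem~\ref{theo:fspiral} via \cite[Theorem~9]{MR0148923}.
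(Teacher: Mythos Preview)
Your high-level plan---establish a disc analogue of Bishop's model theorem and then apply it to a spiral configuration---is exactly what the paper does, and the measurable Riemann mapping step, the boundedness of singular values, and the fact that the quasiconformal correction extends to a boundary homeomorphism of $\overline{\D}$ are all handled just as you describe. The one substantive difference is in the choice of geometric model.

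You propose to fix the spiral $\Gamma$ first and then squeeze a family of disjoint tract domains $T_k$ into the complement of a tubular neighbourhood of $\Gamma$, so that the bounded curve at the end is $\psi(\Gamma)$. This can be made to work, but it carries unnecessary overhead: any \emph{fixed-width} neighbourhood of a spiral accumulating on all of $\partial\D$ eventually covers a full annulus $\{r_0<|z|<1\}$, so your tubular neighbourhood must shrink at a definite rate, and each $T_k$ you build must itself be a spiral strip threading between the turns and reaching $\partial\D$; you would also need to check condition~\eqref{condtoinf} of the $\D$--model for each such strip. None of this is needed. The paper simply takes the \emph{complement} of the spiral as a single tract: set $\Omega=\D\setminus S$ (which is simply connected, with all of $\partial\D$ corresponding to a single prime end), choose the Riemann map $\tau:\Omega\to\mathbb{H}$ so that this prime end goes to $\infty$, and apply the disc version of Bishop's theorem with $\rho=1$. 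The bounded curve is then not the image of $S$ itself but of the level set $\partial\Omega(1)=\tau^{-1}(L_1)$; since $\Omega(1)\subset\D\setminus S$ is not compactly contained in $\D$, this level curve automatically accumulates on all of $\partial\D$, and hence so does its image under $\phi$. One tract, no placement problem, and the entire deduction from the disc-Bishop theorem is a few lines.
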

As noted in the introduction, since it follows from \cite[Theorem 9]{MR0148923} that $f \notin \A$, Theorem~\ref{theo:finBnotA} follows immediately from Theorem~\ref{theo:fspiral}.

The proof of Theorem~\ref{theo:fspiral} is split into two parts. First, in the remainder of this section, we define a $\D$--model, and then state a theorem regarding $\D$--models that is analogous to a construction of Bishop \cite[Theorem 1.1]{MR3384512}. We also show how to use this result to prove Theorem~\ref{theo:fspiral}. Then, in the final section, we give the proof of the result. Wherever possible we retain the notation of \cite{MR3384512}, for ease of reference.

We begin by defining a $\D$--model. Recall that $\mathbb{H}$ denotes the right half-plane $\mathbb{H} = \{ z \in \C  : \operatorname{Re}(z) > 0\}$. Suppose that $I$ is an index set which is at most countably infinite. Suppose that $\Omega = \bigcup_{j \in I} \Omega_j$ is a disjoint (possibly finite) union of domains $\Omega_j \subset \D$ that are simply connected and not compactly contained in $\D$. Suppose also that, for each $j\in I$, the map $\tau_j : \Omega_j \to \mathbb{H}$ is conformal. Let $\tau$ be the map $\tau : \Omega \to \C$ which is equal to $\tau_j$ on $\Omega_j$, for $j \in I$. Suppose that the following conditions all hold:
\begin{enumerate}[(a)]
\item Sequences of components of $\Omega$ accumulate only on $\partial \D$.\label{condaccum}
\item The boundary of $\Omega_j$ is connected, for $j \in I$.\label{condboundary}
\item For any sequence $(z_n)_{n\in\N}$ of points of $\Omega$, we have $|z_n|\rightarrow 1$ as $n\rightarrow\infty$ if and only if $\tau(z_n)\rightarrow\infty$ as $n\rightarrow\infty$.\label{condtoinf}
\end{enumerate}
Finally set $F = \exp \circ \tau$. Note that $F$ is a covering map from each $\Omega_j$ onto $\C\setminus\overline{\D}$. The pair $(\Omega, F)$ is called a \emph{$\D$--model}.

\begin{remark}\normalfont
We note that Bishop's definition of a \emph{model} is as above, but with $\D$ replaced by $\C$ and other modifications; in particular condition \eqref{condtoinf} is replaced by the weaker condition that if $(z_n)_{n\in\N}$ is a sequence of points of $\Omega$ such that $\tau(z_n)\rightarrow\infty$ as $n\rightarrow\infty$, then $z_n\rightarrow\infty$ as $n\rightarrow\infty$. Our stronger condition is needed to ensure that, in the disc setting, the function constructed has a bounded set of finite asymptotic values. Finally, we remark that we use the term $\D$--model mainly in order to distinguish between our setting and Bishop's.
\end{remark}

We now state our version of \cite[Theorem 1.1]{MR3384512}. For completeness we have retained parts of the result which are not required in our application. Here we let 
$$
\Omega(\rho) = \{ z \in \Omega : |F(z)| > e^\rho \}, \qfor \rho > 0,
$$
and
$$
\Omega(\rho_1, \rho_2) = \{ z \in \Omega : e^{\rho_2} > |F(z)| > e^{\rho_1} \}, \qfor \rho_2> \rho_1 > 0. 
$$
\begin{theorem}
\label{theo:construction}
Suppose that $(\Omega, F)$ is a $\D$--model, and that $\rho \in (0, 1]$. Then there exists an unbounded function $f\in\Bdisc$ and a quasiconformal map $\phi : \D \to \D$ such that the following all hold.
\begin{enumerate}
\item We have $f(\phi(z))=F(z)$, for $z \in \Omega(2\rho)$.\label{l0}
\item We have $|f(\phi(z))| \leq e^\rho$, for $z \notin \Omega(\rho)$ and $|f(\phi(z))| \leq e^{2\rho}$, for $z \notin \Omega(2\rho)$.\label{l2}
\item The quasiconstant of $\phi$ is $O(\rho^{-2})$ as $\rho\rightarrow 0$, with constant independent of $F$ and $\Omega$.
\item The map $\phi$ is conformal outside the set $\Omega(\rho/2, 2\rho)$.
\end{enumerate}
\end{theorem}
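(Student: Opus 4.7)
The plan is to adapt Bishop's folding construction from \cite{MR3384512} to the disc setting, retaining his overall strategy but making the measurable Riemann mapping step produce a self-map of $\D$ rather than of $\C$, and using condition \eqref{condtoinf} in a crucial way to control the finite asymptotic values of the resulting function. Throughout, the idea is to build a quasiregular approximation $g:\D\to\widehat{\C}$ whose Beltrami coefficient $\mu_g$ is supported in $\Omega(\rho/2,2\rho)$, then straighten it.

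First I would define $g$ piecewise. On $\Omega(2\rho)$ set $g=F$; this part is already holomorphic. On $\D\setminus\Omega(\rho/2)$ set $g$ to be a bounded holomorphic function (for example a suitable constant, such as $0$ or a fixed point on $C_{e^{\rho/2}}$), adjusted so that the boundary values match after the interpolation. In the annular region $\Omega_j(\rho/2,2\rho)$ of each component, pull the interpolation back through $\tau_j$ to the half-strip $\{\rho/2<\operatorname{Re}w<2\rho\}\subset\mathbb{H}$, where Bishop's explicit quasiconformal folding procedure builds a QC map matching $\exp(w)$ on $\{\operatorname{Re}w=2\rho\}$ and a bounded map on $\{\operatorname{Re}w=\rho/2\}$. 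Bishop's estimates give dilatation $O(\rho^{-2})$ in this strip, and because $\tau_j$ is conformal the pulled-back map on $\Omega_j(\rho/2,2\rho)$ has the same bound. Condition \eqref{condboundary} ensures that the half-strip construction glues correctly to the exterior component, and condition \eqref{condaccum} ensures the components do not accumulate in the interior of $\D$, so the pieces fit together into a globally defined quasiregular $g$ on $\D$.

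Next I would solve the Beltrami equation. The coefficient $\mu_g=\bar\partial g/\partial g$ is compactly supported in $\D$ and satisfies $\|\mu_g\|_\infty<1$ with the bound $O(\rho^{-2})$ on the dilatation. The measurable Riemann mapping theorem, applied in the disc with the normalization $\phi(0)=0$, $\phi(\partial\D)=\partial\D$, produces a QC self-map $\phi:\D\to\D$ with $\mu_\phi=\mu_g$. Setting $f=g\circ\phi^{-1}$ gives a holomorphic function on $\D$. Properties \eqref{l0}, \eqref{l2} follow directly from the definition of $g$, while property 3 follows from the dilatation bound, and property 4 from the fact that $\mu_g\equiv 0$ off $\Omega(\rho/2,2\rho)$.

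The main obstacle, and the step I would be most careful with, is verifying $f\in\Bdisc$: this is where the disc setting differs most essentially from Bishop's original. Critical values of $f$ arise only from $g$ in the interpolation annulus (since $F$ is locally univalent on $\Omega(2\rho)$ and the exterior value is constant), so $CV(f)\subset \overline{B_{e^{2\rho}}}$. For $AV(f)$, suppose $a\in\C$ is a finite asymptotic value of $f$ along a path $\gamma\to\zeta\in\partial\D$; then $\tilde\gamma=\phi^{-1}(\gamma)$ tends to $\partial\D$ (because $\phi$ extends to a homeomorphism of $\overline{\D}$), and $g(\tilde\gamma(t))\to a$. If $\tilde\gamma$ eventually leaves $\Omega$, then the values of $g$ along $\tilde\gamma$ are uniformly bounded by the construction and so $|a|$ is bounded; otherwise $\tilde\gamma$ is eventually contained in some component $\Omega_j$, and the convergence $\exp(\tau_j(\tilde\gamma(t)))\to a$ with $a$ finite forces $\tau_j(\tilde\gamma(t))$ to stay in a bounded subset of $\mathbb{H}$, which by condition \eqref{condtoinf} contradicts $\tilde\gamma\to\partial\D$. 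Hence $AV(f)$ is bounded, and $f\in\Bdisc$. Unboundedness of $f$ is immediate from \eqref{l0} since $|F|$ is unbounded on $\Omega(2\rho)$ by condition \eqref{condtoinf}. The hard part will be checking that the folding/interpolation construction glues properly across the boundaries of the $\Omega_j$ inside $\D$, which is essentially local and so follows Bishop verbatim, together with the careful asymptotic value argument above that specifically exploits the stronger condition \eqref{condtoinf}.
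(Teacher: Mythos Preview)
Your overall architecture---define a quasiregular $g$ on $\D$, straighten via the measurable Riemann mapping theorem, then use condition \eqref{condtoinf} to bound the finite asymptotic values---matches the paper, and your argument for $AV(f)$ bounded is essentially identical to theirs. However, there is a genuine gap in how you define $g$ on the complement $W=\D\setminus\overline{\Omega(\rho)}$.

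Setting $g$ equal to a constant (or any fixed bounded holomorphic function chosen in advance) on $W$ does not allow Bishop's folding to produce a quasiregular interpolation with bounded dilatation. The folding machinery requires that on the inner boundary $\gamma=\partial W$ the map $g$ covers a circle with combinatorics comparable to those of $\exp$ on the outer boundary: concretely, the preimage sets $\mathcal{J}=\{z\in L_1:\exp z=e\}$ and $\mathcal{L}=\{z\in L_1:(g\circ\tau^{-1})(z)=e\}$ must partition $L_1$ into intervals so that each $L$-interval meets at least two and at most $M$ of the $J$-intervals. If $g$ is constant on $W$, then $\mathcal{L}$ is either empty or all of $L_1$, and a single $L$-interval meets infinitely many $J$-intervals; the folding then needs unbounded dilatation. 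What the paper (following Bishop) actually does is take a Riemann map $\Psi:W\to\D$ and compose it with a carefully chosen infinite Blaschke product $B$, setting $g=eB\circ\Psi$ on $W$. The zeros of $B$ are selected from the $\Psi\circ\tau^{-1}$--images of the points of $\mathcal{J}$ so as to force the required comparability between $\mathcal{J}$ and $\mathcal{L}$; this selection uses Koebe distortion and harmonic-measure estimates and is not a purely local matter. Your claim that the gluing ``follows Bishop verbatim'' is correct for the second stage (the strip interpolation), but you have omitted the first stage entirely.

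A smaller point: $\mu_g$ is not compactly supported in $\D$. The set $\Omega(\rho/2,2\rho)$ corresponds under $\tau$ to an infinite vertical strip, and by condition \eqref{condtoinf} its closure in $\overline{\D}$ meets $\partial\D$. This does not obstruct the measurable Riemann mapping theorem (one only needs $\|\mu_g\|_\infty<1$), but your justification should not invoke compact support.
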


Finally in this section, we show how Theorem~\ref{theo:fspiral} can be deduced from Theorem~\ref{theo:construction}.
\begin{proof}[Proof of Theorem~\ref{theo:fspiral}]
Let $S \subset \D$ be a spiral which accumulates on the whole boundary of $\D$. For example, we can take $$S = \left\{ r e^{i\theta} : r \in [0,1), \ \theta = (1-r)^{-1}\right\}.$$

Let $\Omega = \Omega_1 = \D \setminus S$, and let $\tau = \tau_1$ be a conformal (Riemann) map from $\Omega$ to $\mathbb{H}$. Note that $\partial \Omega = \partial \D \cup S$. All points of $S$ are accessible boundary points of $\Omega$, and the whole of $\partial \D$ corresponds to a single prime end $E$ of $\Omega$. By choosing $\tau$ so that $E$ corresponds to infinity under $\tau$, we have that $(\Omega, \exp \circ \tau)$ is a $\D$--model. 

Let $f$ and $\phi$ be the functions that result from an application of Theorem~\ref{theo:construction} with $\rho = 1$. Clearly $f \in \Bdisc$, and $f$ is unbounded. Set $\Gamma = \phi(\partial \Omega(1))$. Then $f$ is bounded on $\Gamma$.

It remains to show that $\Gamma$ accumulates on the whole of $\partial \D$. The domain $\Omega(1)$ is not compactly contained in $\D$ and does not meet $S$. It follows that $\partial \Omega(1)$ accumulates on the whole of $\partial\D$. Since $\phi$ extends homeomorphically to $\partial\D$ (see, for example, \cite[Theorem 8.2]{MR0344463}), it follows that $\Gamma = \phi(\partial \Omega(1))$ accumulates on the whole of $\partial\D$. Hence $f$ has the properties required.
\end{proof}
%
%
%
\section{Proof of Theorem~\ref{theo:construction}}
\label{S5}
In this section we give the proof of Theorem~\ref{theo:construction}. In fact, the proof is extremely close to the original proof of Bishop's result in \cite[Theorem 1.1]{MR3384512}. Accordingly we only outline the proof, highlighting the differences. \\

First we note that we can assume that $\rho = 1$. Bishop's proof of this fact (even though we do not need it) applies immediately. For simplicity we also assume that $I = \{1\}$ is a singleton, and we replace $\Omega_1$ and $\tau_1$, etc., by $\Omega$ and $\tau$, etc. There is no particular additional difficulty in proving the result in the case that $I$ is countably infinite, apart from more complexity of notation.

Let $W = \mathbb{D}\setminus\overline{\Omega(1)}$ and let $\gamma = \partial W$. Let $L_1$ and $L_2$ be the vertical lines
$$
L_p = \{ z = p + iy : y \in \R \}, \qfor p \in \{1, 2\}.
$$
Note that $L_1 = \tau(\gamma)$. Then $W$ is an open, connected, simply connected domain, which is bounded by an analytic arc $\gamma$, which tends to $\partial \D$ in both directions; these properties follow from the three conditions \eqref{condaccum}, \eqref{condboundary} and \eqref{condtoinf} in the definition of a $\D$--model.

In particular, since $W$ is simply connected, we can let $\Psi$ be a Riemann map from $W$ to $\mathbb{D}$. Bishop shows that $\Psi$ can, in fact, be extended to $\D\setminus\overline{\Omega(2)}$, by Schwarz reflection, and so, in particular, $\Psi$ is defined in a neighbourhood of $\gamma$. \\ 

Roughly speaking, our goal is to construct a quasiregular map $g : \D \to \C$ with the right properties for $f$, and then use the measurable Riemann mapping theorem \cite{MR0115006} to recover $f$ itself. A ``first attempt'' to define $g$ might be as follows. First set $g(z) = F(z) = \exp(\tau(z))$, for $z \in \Omega(2)$, so that $g$ maps $\Omega(2)$ analytically to the exterior of the ball $\overline{B_{e^2}}$. Then set $g(z) = e \Psi(z)$, for $z \in W$, so that $g$ maps $W$ conformally to $B_e$. Then interpolate, somehow, in the ``strip'' $\Omega(1) \setminus \overline{\Omega(2)}$. 

Unfortunately there is no reason to believe that $g|_{\gamma}$ is sufficiently close to $g|_{\partial\Omega(2)}$ to enable this interpolation to be constructed in a way which is quasiregular.  

We need something more complicated. The construction works in two stages. Roughly speaking the first stage replaces $e\Psi$ with a map which takes values on $\gamma$ comparable to those of $g$ on $\partial \Omega(2)$. The second stage is the interpolation, and this happens in $\{ z = x + iy : 1 \leq x \leq 2, \ y \in \R\}$. \\

The first stage is as follows. Let $B$ be a certain Blaschke product multiplied by $e$, so that $B : \D \to B_e$. Consider also the sets 
$$
\mathcal{J} = \{ z \in L_1 : \exp(z) = e \}  = \{ 1 + 2n\pi i : n \in \Z\},
$$
and
$$
\mathcal{L} = \{ z \in L_1 : (B\circ\Psi\circ\tau^{-1})(z) = e \}.
$$

We want these two sets to be comparable in the following sense. The set $\mathcal{J}$ partitions $L_1$ into intervals which we call $J$-intervals. Similarly, $\mathcal{L}$ partitions $L_1$ into intervals which we call $L$-intervals. Bishop shows that we can choose the map $B$ so that the following holds. There is an integer $M$ such that each $L$-interval meets at least two $J$-intervals and at most $M$ $J$-intervals. It follows that no $J$-interval contains an $L$-interval.

The technique for proving the existence of such a $B$ is as follows. First the points of $\mathcal{J}$ are pulled back to the boundary of $\D$ by the map $\Psi \circ \tau^{-1}$; let these points be denoted by $(a_n)_{n\in\N}$. The map $B$ is then defined by constructing a certain infinite set $\mathcal{K} \subset \N$, and defining
$$
B(z) = e \prod_{k \in \mathcal{K}} \frac{|a_k|}{a_k}\frac{a_k - z}{1 - \overline{a_k}z}.
$$
The fact that there exists a set $\mathcal{K}$ such that $B$ has the required properties is proved using two sets of facts; see \cite[Section 4]{MR3384512}. The first is that images of adjacent $J$-intervals under $\Psi \circ \tau^{-1}$ have comparable size (with uniform bounds), which follows from the Koebe distortion theorem. The second concerns certain properties of the harmonic measure of boundary arcs of the unit disc $\Psi(W)$. These facts all carry across from Bishop's proof into our setting directly, and we omit further detail. \\

The second stage is the actual interpolation, and is quite complicated. However, it should be noted that this part of the construction happens entirely within the closed strip
$$
S = \{ z = x + iy : 1 \leq x \leq 2, \ y \in \R \},
$$
and only uses the established relationship between $\mathcal{J}$ and $\mathcal{L}$. So this entire stage carries over from Bishop's setting completely unchanged.

\begin{figure}
	\includegraphics[width=14cm,height=10cm]{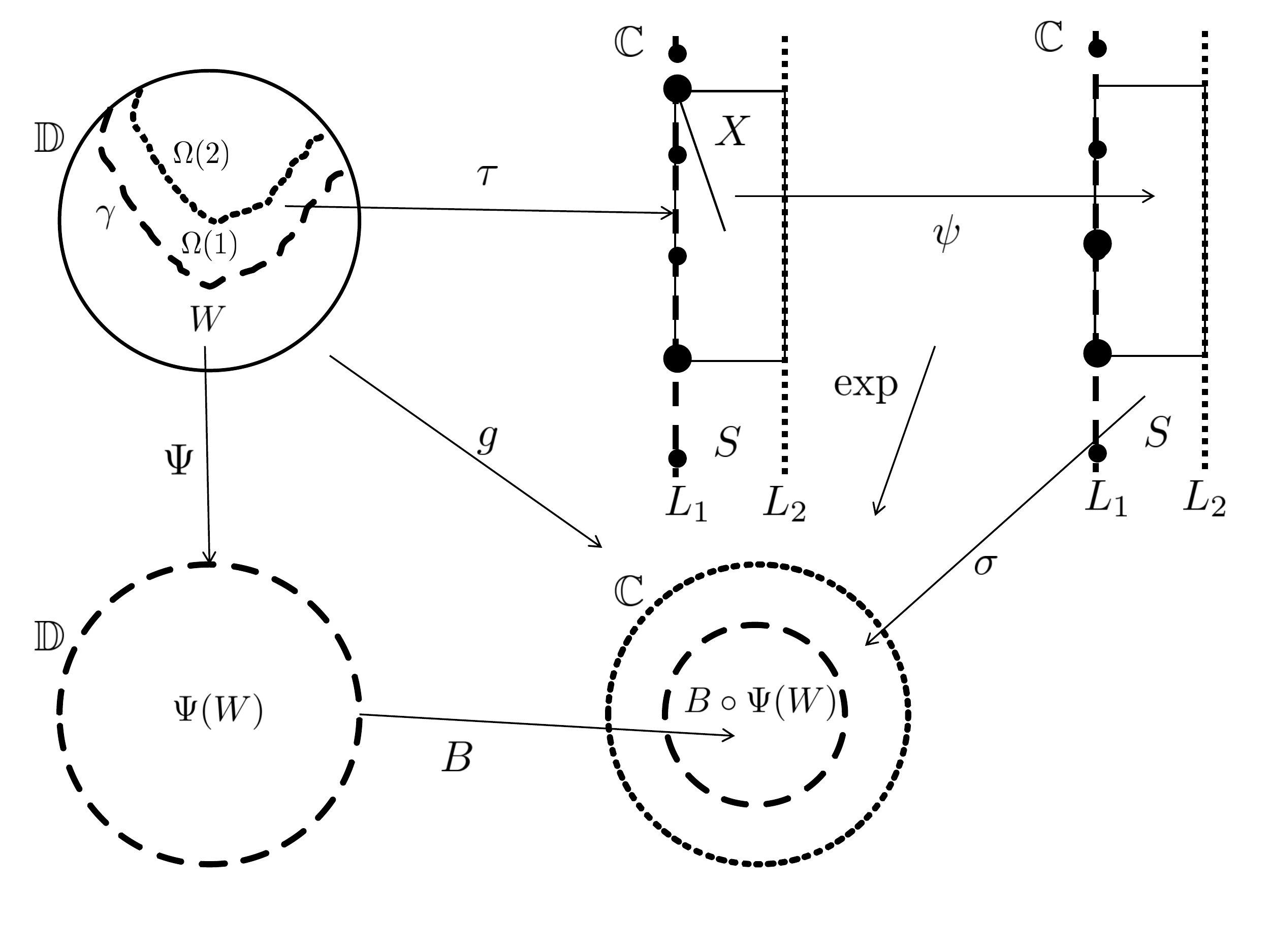}
  \caption{The full interpolation between $B \circ \Psi$ and $F$. On $L_1$, elements of $\mathcal{J}$ are shown as small dots, and two elements of $\mathcal{L}$ are shown as large dots. After the function $\psi$ these elements of $\mathcal{L}$ are coincident with elements of $\mathcal{J}$.}\label{fig2}
\end{figure}

For completeness, though, we outline the construction. The set $S$ is first divided into a collection of rectangles aligned with the axes, so that two corners of each rectangle are adjacent points of $\mathcal{L}$. One such rectangle is shown in Figure~\ref{fig2}. The rectangle is equipped with a slit running part-way along a diagonal (see Figure~\ref{fig2} again), and then four quasiregular maps are applied to this set (we will call it $X$).
\begin{enumerate}[(a)]
\item The first map, $\psi_1$, is the identity on all sides of $X$ apart from the left-hand side. Note that this left-hand side is mapped by $B \circ \Psi \circ \tau^{-1}$ to a circle. The effect of $\psi_1$ on this side is to ``stretch'' it so that -- for example -- the point which is mapped by $B \circ \Psi \circ \tau^{-1}$ half-way around the circle is mapped by $\psi_1$ to a point half-way up the side of $X$. (Bishop terms $\psi_1$ a \emph{straightening}.)
\item The second map, $\psi_2$, linearly moves and stretches the rectangle to align it with elements of $\mathcal{J}$.
\item The third map, $\psi_3$, which is the most complicated, opens out the slit, and linearly maps the element of $\mathcal{L}$ that was the left-hand side of $X$ to an element of $\mathcal{J}$ (say $J$). The composition $\psi_3 \circ \psi_2 \circ \psi_1$ is denoted by $\psi$; see Figure~\ref{fig2}. Note that $\psi$ is not even continuous (or even defined) on the whole of $S$. (In Bishop's terms, $\psi_3$ is a \emph{quasiconformal folding}.)
\item The final map (denoted by ${\sigma}$, and see Figure~\ref{fig2} again) is carefully constructed to resolve the continuity issue mentioned in the previous sentence, and is equal to the exponential function on $J$.
\end{enumerate}

Using these maps we can define a quasiregular map $g : \D \to \C$ by
$$
g(z) = 
\begin{cases}
(B \circ \Psi)(z), &\text{for } z \in W, \\
(\exp \circ \ \tau)(z), &\text{for } z \in \Omega(2), \\
(\sigma \circ \psi \circ \tau)(z), &\text{for } z \in \overline{\Omega(2) \setminus \Omega(1)}.
\end{cases}
$$

Since $g$ is a quasiregular map from $\D$ to $\C$, it follows from the measurable Riemann mapping theorem that there exists a quasiconformal map $\phi : \D \to \D$ such that $f = g \circ \phi^{-1}$ is a holomorphic map from $\D$ to $\C$. 

We need to show that $f$ has the required properties. The fact that we have $f(\phi(z)) = F(z)$, for $z \in \Omega(2)$, is immediate. The facts that $|f(\phi(z))| \leq e$, for $z \notin \Omega(1)$ and $|f(\phi(z))| \leq e^{2}$, for $z \notin \Omega(2)$ are also immediate. The last two conclusions of the theorem also follow, though we do not require these and the proof is omitted.

It remains to show that the singular values of $f$ are bounded, and so $f \in \mathcal{B}_{\D}$. Note that the singular values of $f$ and $g$ coincide. It is easy to see that $g$ has no critical points in $\Omega(2)$. It follows that the critical values of $f$ are bounded. If $g$ has a finite asymptotic value of modulus greater than $e^2$, then this value must be the limit of $g$ along a curve $\Gamma$ lying in $\Omega(2)$ which tends to $\partial \D$. In this case $e^z$ has a finite limit along $\tau(\Gamma) \subset \mathbb{H}$. Moreover, it follows from the final property of a $\D\text{--model}$ that $\tau(\Gamma)$ is unbounded, which is in contradiction to the previous sentence. It follows that the asymptotic values of $f$ are also bounded, and this completes the proof.
%
%
%
%
%
%
%
%
%

\end{document}